\newcommand{\Aff}{\mathbb{A}}
\newcommand{\nw}{\mathrm{nw}}
\newcommand{\F}{\mathbb{F}}
\newcommand{\KK}{\mathbb{K}}
\newcommand{\PP}{\mathbb{P}}
\newcommand{\Q}{\mathbb{Q}}
\newcommand{\Z}{\mathbb{Z}}
\newcommand{\aaa}{\mathbf{a}}
\newcommand{\cP}{\mathcal{P}}
\newcommand{\fm}{\mathfrak{m}}
\newcommand{\OO}{\mathcal{O}}
\DeclareMathOperator{\Stab}{Stab}
\DeclareMathOperator{\rad}{rad}
\DeclareMathOperator{\divv}{div}
\DeclareMathOperator{\End}{End}
\DeclareMathOperator{\Gal}{Gal}
\DeclareMathOperator{\ord}{ord}
\DeclareMathOperator{\Sel}{Sel}
\renewcommand{\setminus}{-}
\newtheorem{thm}{Theorem}
\newtheorem{lem}[thm]{Lemma}
\newtheorem{conj}[thm]{Conjecture}
\newtheorem{cor}[thm]{Corollary}
\newtheorem{prop}[thm]{Proposition}
\theoremstyle{definition}
\newtheorem{example}[thm]{Example}
\theoremstyle{remark}
\definecolor{darkgreen}{rgb}{0,0.5,0}
\begin{document}

\title[]{
Primitive Algebraic Points on Curves
}

\begin{abstract}
A number field $K$ is primitive if $K$ and $\Q$ are the only subextensions of $K$.
Let $C$ be a curve defined over $\Q$. 
We call an algebraic point $P\in C(\overline{\Q})$ primitive if the number
field $\Q(P)$ is primitive.  We present several sets of sufficient conditions
for a curve $C$ to have finitely many
primitive points of a given degree $d$.
For example, let $C/\Q$ be a hyperelliptic curve of genus $g$,
and let $3 \le d \le g-1$. Suppose that the Jacobian $J$ 
of $C$ is simple. We show that $C$ has only finitely many primitive degree $d$ points,
and in particular it has only finitely many degree $d$ points with Galois group $S_d$
or $A_d$. However, for any even $d \ge 4$,
a hyperelliptic curve $C/\Q$ has infinitely many imprimitive 
degree $d$ points whose Galois group is a subgroup of 
	$S_2 \wr S_{d/2}$.
\end{abstract}

\author{Maleeha Khawaja}

\address{
	School of Mathematics and Statistics\\
	Hicks Building\\
	University of Sheffield\\
	Sheffield S3 7RH\\
	United Kingdom
	}
\email{mkhawaja2@sheffield.ac.uk}

\author{Samir Siksek}

\address{Mathematics Institute\\
    University of Warwick\\
    CV4 7AL \\
    United Kingdom}

\email{s.siksek@warwick.ac.uk}

\date{\today}
\thanks{
Khawaja is supported by an EPSRC studentship from the University of Sheffield (EP/T517835/1).
Siksek is supported by the
EPSRC grant \emph{Moduli of Elliptic curves and Classical Diophantine Problems}
(EP/S031537/1). }
\keywords{Curves, Jacobians, primitive points}
\subjclass[2010]{11G30}

\maketitle

\section{Introduction}

By a \textbf{curve} $C$ over a field $K$ we mean a smooth projective
and geometrically irreducible variety defined over $K$ having 
dimension $1$. 
We say that a curve $C$ defined over a field $K$
is \textbf{hyperelliptic} if the genus of $C$
is at least $2$ and $C$ admits a degree $2$
morphism $C \rightarrow \PP^1$, defined
over $K$, which we shall refer to as the
\textbf{hyperelliptic morphism}.
We say that $C/K$ has \textbf{$K$-gonality} $m$
if $m$ is the least degree of a non-constant morphism 
$\pi : C \rightarrow \PP^1$ defined over $K$.
Thus, for example, a hyperelliptic curve defined over $K$ has
$K$-gonality $2$. 
We say that $C/K$ is \textbf{bielliptic} if its genus is
at least $2$ and it admits a degree $2$ morphism 
$C \rightarrow E$, defined over $K$, where
$E$ is a curve of genus $1$.

Recall that a number field
$K$ is called \textbf{imprimitive}
if there is some subextension $\Q \subsetneq L \subsetneq K$;
if there is no such subextension then $K$ is called
\textbf{primitive}. Now let $C$ be a curve defined over $\Q$.
Let $P \in C$ be an algebraic point; i.e.\ $P \in C(\overline{\Q})$. 
We say $P$ has \textbf{degree $d$}
if the number field $\Q(P)$ has degree $d$.
We say that $P$ is \textbf{primitive}
if the number field $\Q(P)$ is primitive, otherwise
we say that $P$ is $\textbf{imprimitive}$. 
A degree $d$ point $P \in C(\overline{\Q})$
is called \textbf{$\PP^1$-isolated} if there is no degree $d$
non-constant morphism $\phi : C \rightarrow \PP^1$,
defined over $\Q$, such that $P$ is in the preimage
of an element of $\PP^1(\Q)$. 
The notion of isolated points was introduced
in \cite{BELOV} and has become
important in understanding low degree points on curves,
particularly on modular curves (e.g. \cite{Bourdon2020odd}, \cite{Ejder}).
It is easy to see that  if $d<m$, where $m$ is the $\Q$-gonality of $C$,
then $P$ is $\PP^1$-isolated. A key observation we make in
this paper is that primitive points
are often $\PP^1$-isolated even if the degree is greater than the gonality.
\begin{thm}\label{thm:isolated}
Let $C$ be a curve defined over $\Q$
with genus $g$ and $\Q$-gonality $m \ge 2$. 
Let $d \ge 2$ be an integer satisfying
	\begin{equation}\label{eqn:gencondgon}
		d \ne  m, \qquad 
		d \, < \, 1+\frac{g}{m-1}.
	\end{equation}
Let $P \in C(\overline{\Q})$ be a degree $d$ point on $C$
that is not $\PP^1$-isolated. Then $\Q(P)$
contains a subfield of index $d^\prime$ satisfying
\[
	1<d^\prime<d, \qquad d^\prime \mid \gcd(d,m).
\]
In particular, the following hold.
\begin{enumerate}[(I)]
\item If $\gcd(d,m)=1$ or $d$ is prime then any degree $d$ point 
$P \in C(\overline{\Q})$ is $\PP^1$-isolated.
\item If $P \in C(\overline{\Q})$ is a primitive
degree $d$ point, then $P$ is $\PP^1$-isolated.
\end{enumerate}
\end{thm}
Under certain additional assumptions on the Jacobian $J$ of the curve $C$,
it is possible to conclude finiteness of primitive degree $d$ points on $C$.
\begin{thm}\label{thm:gonality}
Let $C$ be a curve defined over $\Q$
with genus $g$ and $\Q$-gonality $m \ge 2$. 
Let $d \ge 2$ be an integer satisfying \eqref{eqn:gencondgon}.
Let $J$ be the Jacobian of $C$.
Suppose either of the following hold:
\begin{enumerate}[(a)]
\item $J(\Q)$ is finite;
\item or $d \le g-1$, 
and $A(\Q)$ is finite for every abelian subvariety $A/\Q$ of $J$ of dimension $\le d/2$.
\end{enumerate}
Then $C$ has finitely many primitive degree $d$ points. Moreover, if $\gcd(d,m)=1$ or $d$ is prime then
$C$ has finitely many degree $d$ points. 
\end{thm}
Observe that, for $d \le g-1$, assumption (b) is trivially satisfied
if $J$ is simple.
We moreover note that the inequality $d \le g-1$ in (b) follows immediately 
from \eqref{eqn:gencondgon} if $m \ge 3$. 
\begin{example}
 We consider the modular curve $X_{0}(239)$ which has genus $g=20$ and
 $\Q$-gonality $m=6$ (see \cite[Table 3]{najman2023gonality}).
We consider degree $d$ points for $d=2$, $3$, $4$;
we note that \eqref{eqn:gencondgon} is satisfied
for these values of $d$.
 A straightforward computation in \texttt{Magma}, 
which makes use of modular symbols algorithms due to Cremona \cite{Cre} and Stein \cite{Stein},
shows that the Jacobian
        $J_{0}(239)$ of $X_{0}(239)$ factors as
        \[
                J_{0}(239)\sim \mathcal{A}_{3}\times \mathcal{A}_{17},
        \]
        where $\mathcal{A}_{3}$ and $\mathcal{A}_{17}$ are
        simple abelian varieties of dimension $3$ and $17$, respectively.
Moreover $\mathcal{A}_{17}$ has analytic rank $0$,
and $\mathcal{A}_{3}$ has positive analytic rank. Assuming the Birch and Swinnerton--Dyer
conjecture,
$J_0(239)(\Q)$ is infinite, and so hypothesis (a) of Theorem~\ref{thm:gonality}
is not satisfied.
However, $J_0(239)$,
clearly satisfies hypothesis (b) for $d=2$, $3$, $4$.
By Theorem~\ref{thm:gonality},
we conclude that $X_{0}(239)$ has finitely many quadratic, cubic
and primitive quartic points.
\end{example}

We point out the following theorem 
\cite[Proposition 2.3]{DerickxSutherland}
which gives a stronger conclusion than Theorem~\ref{thm:gonality}
under different assumptions.
\begin{thm}[Derickx and Sutherland]
Let $C$ be a curve defined over $\Q$
with genus $g$ and $\Q$-gonality $m \ge 2$. 
Let $J$ be the Jacobian of $C$ and suppose $J(\Q)$
is finite. Suppose $d<m$. Then $C$ has finitely
many degree $d$ points.
\end{thm}

The following corollary to Theorem~\ref{thm:gonality} is obtained by restricting
Theorem~\ref{thm:gonality} and its proof to the hyperelliptic case.
\begin{cor}\label{cor:hyp}
Let $C$ be a hyperelliptic curve defined over $\Q$ 
with genus $g$. 
Let $J$ be the Jacobian
of $C$ and let $d$ be a positive integer.
Suppose either of the following hold:
\begin{enumerate}[(a)]
\item $3 \le d \le g$ and $J(\Q)$ is finite;
\item or $3 \le d \le g-1$,
and $A(\Q)$ is finite for every abelian subvariety $A/\Q$ of $J$ of dimension $\le d/2$.
\end{enumerate}
Then $C$
has finitely many primitive degree $d$ points.
More precisely, the following hold.
\begin{enumerate}[(i)]
\item If $d$ is odd, then $C$
has finitely many degree $d$ points.
\item If $d$ is even, then
for all but finitely many
degree $d$ points $P$ on $C$,
the field $\Q(P)$ contains a subfield of index $2$.
\end{enumerate}
\end{cor}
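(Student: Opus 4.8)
The plan is to obtain Corollary~\ref{cor:hyp} by applying Theorem~\ref{thm:gonality} with $m=2$ and then inspecting, in the hyperelliptic case, how the degree~$d$ points that the proof of that theorem permits to be infinite in number actually arise.

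First I would record that a hyperelliptic curve $C$ has genus $g\ge 2$, hence $\Q$-gonality exactly $m=2$ (the hyperelliptic morphism $\pi$ gives $m\le 2$, and $g\ge 2$ gives $m\ge 2$). With $m=2$ the hypotheses of Theorem~\ref{thm:gonality} are met: $d\ne m$ since $d\ge 3$; the bound $g>(m-1)(d-1)=d-1$ holds since $d\le g$ in case~(a) and $d\le g-1$ in case~(b) (in~(b) the inequality $d\le g-1$ is imposed by hand, as the implication recorded after Theorem~\ref{thm:gonality} requires $m\ge 3$); and the hypotheses~(a),~(b) of the theorem coincide with ours. Hence $C$ has finitely many primitive degree~$d$ points. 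If moreover $d$ is odd then $\gcd(d,m)=\gcd(d,2)=1$, so the last sentence of Theorem~\ref{thm:gonality} gives that $C$ has only finitely many degree~$d$ points; this is~(i).

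For~(ii), suppose $d$ is even and let $P$ be any degree~$d$ point. As $\pi$ is a degree~$2$ morphism defined over $\Q$, we have $\Q(\pi(P))\subseteq\Q(P)$ with $[\Q(P):\Q(\pi(P))]\in\{1,2\}$, so $\deg\pi(P)$ equals $d$ or $d/2$, being $d$ exactly when $\Q(P)=\Q(\pi(P))$. It therefore suffices to prove that only finitely many degree~$d$ points $P$ satisfy $\deg\pi(P)=d$; granting this, all but finitely many degree~$d$ points have $\deg\pi(P)=d/2$, and then $\Q(\pi(P))$ is the asserted subfield of $\Q(P)$, of degree $d/2$ and proper since $d\ge 4$. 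For the finiteness I would re-run the argument of Theorem~\ref{thm:gonality}. Write $D_P\in C^{(d)}(\Q)$ for the reduced effective divisor supported on the Galois orbit of $P$, and $[D_P]$ for its class under the Abel--Jacobi map $C^{(d)}\to\Pic^d_C$. Under hypothesis~(a), $J(\Q)$ is finite so $\Pic^d_C(\Q)$ is finite; under hypothesis~(b), either $\Pic^d_C(\Q)$ is empty (so there are no degree~$d$ points and nothing to prove) or, applying Faltings' theorem on subvarieties of abelian varieties together with the simplicity of $J$ and the fact that the image of $C^{(d)}$ in $\Pic^d_C$ has dimension $d<g$, one sees that $[D_P]$ takes only finitely many values. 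Either way $D_P$ belongs to one of finitely many linear systems $|c|$ on $C$; those with $\dim|c|=0$ give finitely many $P$. If $\dim|c|\ge 1$, then by the classification of linear systems on hyperelliptic curves (geometric Riemann--Roch), the mobile part of $|c|$ is the $\pi$-pullback of a linear system on $\PP^1$; thus $|c|=E+\pi^{*}|c''|$ for a fixed effective $\Q$-rational divisor $E$ and a linear system $|c''|$ on $\PP^1$. Since $D_P$ is a single Galois orbit, it has no nonzero proper $\Q$-rational subdivisor, so $E=0$ and $D_P=\pi^{*}\mathfrak{e}_P$ with $\mathfrak{e}_P$ effective of degree $d/2$ on $\PP^1$; as $D_P$ is reduced, $\mathfrak{e}_P$ must be the Galois orbit of $\pi(P)$, so $\deg\pi(P)=d/2\ne d$. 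Hence the degree~$d$ points with $\deg\pi(P)=d$ all have $D_P$ in one of the finitely many zero-dimensional systems, so there are only finitely many of them, which proves~(ii).

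The verification of hypotheses and the reduction of~(ii) to the finiteness of $\{P:\deg\pi(P)=d\}$ are routine bookkeeping. The substantive step — already the core of Theorem~\ref{thm:gonality} — is the structural fact that a positive-dimensional $\Q$-rational linear system of degree $\le d$ on $C$ is severely constrained by the gonality and the genus bound; in the hyperelliptic case this reduces to the clean statement that the mobile part of such a system is always a pullback along $\pi$, whereas for general $m$ one argues via the Castelnuovo--Severi inequality and the near-uniqueness of the gonality pencil, and this is where I expect the real difficulty of a self-contained proof to lie. The remaining ingredient — the Mordell--Lang/Faltings input that $[D_P]$ ranges over finitely many classes — is standard. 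Given Theorem~\ref{thm:gonality} and its proof, the corollary is then obtained by setting $m=2$ and reading off that the only infinite families of degree~$d$ points on $C$ are the fibres of $\pi$ over degree-$(d/2)$ points of $\PP^1$.
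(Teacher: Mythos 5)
Your proposal is correct, and its overall architecture matches the paper's: finiteness of primitive points comes from the Faltings--Abel--Jacobi decomposition of $C^{(d)}(\Q)$ into finitely many complete linear series (Proposition~\ref{prop:decomp}), part (i) comes from the $\gcd(d,2)=1$ clause of Theorem~\ref{thm:gonality}, and the content of part (ii) is a structural statement about irreducible divisors lying in a positive-dimensional series. Where you diverge is in how you prove that structural statement. The paper (via Lemma~\ref{lem:func} and Corollary~\ref{cor:main1}) turns an irreducible $D$ with $\ell(D)\ge 2$ into a degree~$d$ morphism $f:C\to\PP^1$ with $f^*(\infty)=D$, invokes the Castelnuovo--Severi inequality to force $f=u\circ\pi$ with $\deg u=d/2$, and then uses Galois transitivity on $u^*(\infty)\ni\pi(P)$. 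You instead invoke the classical description (via geometric Riemann--Roch) of a complete special linear series on a hyperelliptic curve as (base locus) $+\;r\cdot g^1_2$, kill the base locus by irreducibility of $D$, and read off $D=\pi^*(\text{Galois orbit of }\pi(P))$ from reducedness. Both routes land on the same identity $D=\pi^*\mathfrak{e}$; yours is more classical and avoids constructing the auxiliary morphism, but it is specific to the hyperelliptic case, whereas the Castelnuovo--Severi argument is exactly what lets the paper generalize to gonality $m\ge 3$ (Theorem~\ref{thm:gonality}) and to covers of positive-genus curves (Theorem~\ref{thm:relative}). Two small points you should make explicit: the hyperelliptic structure theorem applies because the series is \emph{special}, which follows from $\ell\ge 2$, $\deg = d\le g$ and Riemann--Roch; and the decomposition into base locus plus pullback must be checked to be Galois-equivariant, which is routine since the base locus and the hyperelliptic pencil are defined over $\Q$.
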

We note that Gunther and Morrow \cite[Proposition 2.6]{GuntherMorrow}
prove that conclusions (i) and (ii) hold for $100\%$ of genus $g$ hyperelliptic curves over $\Q$
with a rational Weierstrass point, provided only that $d \le g-1$, though their proof
is rather different.

In contrast to Corollary~\ref{cor:hyp} we note the following.
\begin{lem}\label{lem:hypimprim}
Let $C$ be a hyperelliptic curve defined over $\Q$.
Let $d \ge 4$ be an even integer. Then $C$ 
has infinitely many imprimitive degree $d$ points.
\end{lem}
\begin{proof}
We may suppose $C$ has an affine model
\begin{equation}\label{eqn:hyper}
	C \; : \; Y^2 = F(X)
\end{equation}
where $F \in \Q[X]$ is a squarefree polynomial.
Let $L$ be any number field of degree $d/2$
and choose $\theta \in L$ such that $L=\Q(\theta)$.
By Faltings' theorem, $C(L)$ is finite. Thus 
we may choose some $a \in \Q$ such that
$F(\theta+a)$ is a non-square in $L$. 
Let $P=(\theta+a,\sqrt{F(\theta+a)})$. This is a degree $d$ point on $C$,
and is imprimitive as $\Q(P)$ contains the index $2$
subfield $L$.
\end{proof}

Let $C$ be a curve defined over $\Q$.
Let $P \in C$ be an algebraic point; i.e.\ $P \in C(\overline{\Q})$. 
We define the \textbf{Galois group of $P$}
to be the Galois group of the Galois closure
of $\Q(P)/\Q$. A degree $d \ge 4$ point whose
Galois group is $S_d$ or $A_d$ is primitive (Lemma~\ref{lem:pointSdprim}).
Thus if $d$ is an even integer and if $C$, $d$ satisfy the 
hypotheses of Corollary~\ref{cor:hyp} then $C$ has only finitely
many degree $d$ points with Galois group $S_d$ or $A_d$.
However, it follows from the proof of Lemma~\ref{lem:hypimprim}
that $C$ has infinitely many degree $d$ points
with Galois group contained in $S_{2} \wr S_{d/2}$.
In a separate paper 
\cite{KhawajaSiksek2} we explore Galois groups of 
algebraic points in more detail. For now, we content
ourselves with the following result.
\begin{thm}\label{thm:trivialjac}
Let $C$ be a hyperelliptic curve defined over $\Q$ with genus $2$ or $3$.
Let $J$ be the Jacobian of $C$ and suppose that $J(\Q)$ is
trivial.  Then $C$ has no quartic points with Galois group $S_4$
or $A_4$.
However, $C$ has infinitely many quartic points with
Galois group contained in $D_4$.
\end{thm}

The above results exploit the gonality map $C \rightarrow \PP^1$ to 
make deductions about low degree primitive points.
However, the existence of a low degree map $C \rightarrow C^\prime$ with $C^\prime$ of positive genus
also makes it more likely for low degree primitive algebraic points on $C$ to
be $\PP^1$-isolated, as illustrated by the following theorem.
\begin{thm}\label{thm:relativeisolated}
Let $\pi : C \rightarrow C^\prime$ be a morphism of curves defined over $\Q$ of degree $m \ge 2$.
Write $g$, $g^\prime$ for the genera of $C$, $C^\prime$
respectively, and suppose $g^\prime \ge 1$. 
Let $d \ge 2$ be an integer satisfying
\begin{equation}\label{eqn:gencondrel}
	d \, < \, 1+\frac{g-m g^\prime}{m-1}.
\end{equation}
Let $P \in C(\overline{\Q})$ be a degree $d$ point on $C$
that is not $\PP^1$-isolated. Then $\Q(P)$
contains a subfield of index $d^\prime$ satisfying
\[
	1<d^\prime<d, \qquad d^\prime \mid \gcd(d,m).
\]
In particular, the following hold.
\begin{enumerate}[(I)]
\item If $\gcd(d,m)=1$ or $d$ is prime then any degree $d$ point 
$P \in C(\overline{\Q})$ is $\PP^1$-isolated.
\item If $P \in C(\overline{\Q})$ is a primitive
degree $d$ point, then $P$ is $\PP^1$-isolated.
\end{enumerate}
\end{thm}

\begin{thm}\label{thm:relative}
Let $\pi : C \rightarrow C^\prime$ be a morphism of curves defined over $\Q$ of degree $m \ge 2$.
Write $g$, $g^\prime$ for the genera of $C$, $C^\prime$
respectively, and suppose $g^\prime \ge 1$. 
Let $d \ge 2$ be an integer satisfying \eqref{eqn:gencondrel}.
Write $J$ for the Jacobian of $C$ and suppose $J(\Q)$ is finite.
Then $C$ has finitely many primitive degree $d$ points. Moreover, if $\gcd(d,m)=1$ or $d$ is prime then
$C$ has finitely many degree $d$ points.
\end{thm}
\begin{example}\label{ex:X1(45)}
We illustrate Theorem~\ref{thm:relative} by giving an example.
Consider the modular curve $X_1(45)$. 
The \texttt{LMFDB} \cite{lmfdb} gives the following information:
\begin{enumerate}[(a)]
\item $X_1(45)$ has genus $41$;
\item $X_1(45)$ is a degree $3$ cover of a genus $9$ curve (the latter has 
\texttt{LMFDB} label \texttt{45.576.9-45.a.4.1});
\item $J_1(45)$ has analytic rank $0$.
\end{enumerate}
Write $J=J_1(45)$.
As $J$ has analytic rank $0$, a theorem of Kato \cite[Corollary 14.3]{Kato} 
implies that the Mordell--Weil group $J(\Q)$ is finite. Thus, by
Theorem~\ref{thm:relative}, the curve
$X_1(45)$ has only finitely many degree $d$
points for $d=2$, $3$, $4$, $5$, $7$,
and only finitely many primitive degree $d$ points for $d=6$. 
We point out that the $\Q$-gonality of $X_1(45)$ appears to be currently
unknown; according to the \texttt{LMFDB} it belongs to the interval $9 \le
\gamma \le 18$.  
\end{example}

Applying Theorem~\ref{thm:relative} and its proof to bielliptic curves
gives the following.
\begin{cor}\label{cor:bielliptic}
Let $C$ be a bielliptic curve defined over $\Q$ with genus $g$.
Let $J$ be the Jacobian of $C$ and suppose $J(\Q)$ is finite.
Let $2\leq d\leq g-2$.
Then $C$ has finitely many primitive degree $d$ points.
More precisely, the following hold.
\begin{enumerate}[(i)]
\item If $d=2$ or $d$ is odd, then $C$ has finitely many degree $d$ points.
\item If $d \ge 4$ and even, then for
all but finitely many
degree $d$ points $P$ on $C$,
the field $\Q(P)$ contains
a subfield of index $2$.
\end{enumerate}
\end{cor}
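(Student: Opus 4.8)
The plan is to deduce Corollary~\ref{cor:bielliptic} from Theorem~\ref{thm:relative} and the mechanism of its proof. Apply Theorem~\ref{thm:relative} to $\pi\colon C\to E$, so that $m=2$, $C^\prime=E$ and $g^\prime=1$; then \eqref{eqn:gencondrel} reads $g>2+(d-1)=d+1$, which is exactly the hypothesis $2\le d\le g-2$. Thus Theorem~\ref{thm:relative} gives finiteness of the primitive degree $d$ points, and since $d=2$ is prime while an odd $d$ satisfies $\gcd(d,2)=1$, its ``moreover'' clause gives finiteness of \emph{all} degree $d$ points in those cases; this is part~(i). The remaining work is part~(ii), for which I would revisit the proof of Theorem~\ref{thm:relative}.

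First I would note that, since $\deg\pi=2$, for a degree $d$ point $P$ the map $\pi$ sends the Galois orbit of $P$ onto that of $\pi(P)$ with all fibres of the same size $d/\deg\pi(P)\in\{1,2\}$, so $\deg\pi(P)\in\{d/2,\,d\}$; hence it suffices to show that only finitely many degree $d$ points $P$ have $\deg\pi(P)=d$. Assume not. Then the conjugate sums $D_P=P_1+\cdots+P_d$ furnish infinitely many $\Q$-rational points of $C^{(d)}$; since $J(\Q)$ is finite, the image of $C^{(d)}$ in $\Pic^d(C)$ has only finitely many rational points, so these $D_P$ lie in finitely many fibres of $C^{(d)}\to\Pic^d(C)$. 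I would therefore fix a $\Q$-rational degree $d$ divisor class whose complete linear system $|D_0|$ has dimension $r\ge 1$ and contains infinitely many $D_P$, and write $|D_0|=\mathfrak{b}+|D_1|$ with $\mathfrak{b}\ge 0$ the $\Q$-rational fixed part and $|D_1|$ base-point-free of dimension $r$; since $|D_1|$ defines a non-constant morphism on a curve of genus $\ge 2$, $\deg D_1=d-\deg\mathfrak{b}\ge 2$, so $\deg\mathfrak{b}\le d-2$.

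The heart of the argument is to show that $|D_1|$ is pulled back from $E$. A generic pencil inside $|D_1|$ is base-point-free and yields a $\Q$-morphism $f\colon C\to\PP^1$ of degree $d-\deg\mathfrak{b}\le d$; applying the Castelnuovo--Severi inequality to $f$ and $\pi$ I would rule out the case of no common factorisation, since it forces $g\le 2g^\prime+(\deg f-1)(\deg\pi-1)\le d+1<g$. As $\deg\pi=2$, the common cover must be $\pi$ itself, so $f$ factors through $\pi$; letting the pencil vary, $\phi\colon C\to\PP^r$ factors through the bielliptic involution $\iota$, whence $|D_1|=\pi^*|\mathcal{N}|$ for a linear system $|\mathcal{N}|$ on $E$ with $\deg\mathcal{N}=(d-\deg\mathfrak{b})/2$. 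Here is where the hypothesis $d\le g-2$ is genuinely needed: writing $\pi_*\mathcal{O}_C\cong\mathcal{O}_E\oplus\mathcal{L}^{-1}$ with $\deg\mathcal{L}=g-1$ (Riemann--Hurwitz), the projection formula gives $h^0(C,\pi^*\mathcal{N})=h^0(E,\mathcal{N})$ because $\deg\mathcal{N}\le d/2\le(g-2)/2<g-1$, so $|\pi^*\mathcal{N}|=\pi^*|\mathcal{N}|$ with no ``extra'' members. Consequently every $D_P=\mathfrak{b}+\pi^*E_P$ with $E_P\ge 0$ on $E$, and then
\[
	\deg\pi(P)\;\le\;\deg\mathfrak{b}+\deg E_P\;=\;\frac{d+\deg\mathfrak{b}}{2}\;\le\;d-1,
\]
contradicting $\deg\pi(P)=d$. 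This establishes~(ii); the same computation with $d$ odd re-proves finiteness of all degree $d$ points, since $\pi$ cannot be $2$-to-$1$ on an orbit of odd size.

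I expect the main obstacle to be the careful handling of the fixed part $\mathfrak{b}$ throughout: one must track it through the Castelnuovo--Severi step and the cohomological vanishing, and verify that the crude bound $\deg\pi(P)\le(d+\deg\mathfrak{b})/2$ still lies strictly below $d$ (which is exactly why $\deg\mathfrak{b}\le d-2$ is recorded). A second, more routine technical point is to justify, when $r\ge 2$, that a generic hyperplane pencil on $\phi(C)\subseteq\PP^r$ is base-point-free and that the resulting family of degree $\le d$ maps $C\to\PP^1$ forces $\phi$ itself — not merely individual pencils — to be invariant under $\iota$.
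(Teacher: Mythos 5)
Your proof is correct, and for part (i) it is exactly the paper's route: Theorem~\ref{thm:relative} applied to $\pi\colon C\to E$ with $m=2$, $g^\prime=1$, so that \eqref{eqn:gencondrel} becomes $d\le g-2$. For part (ii) you reach the right conclusion but by a more laborious path than the paper intends. The paper's mechanism is Lemma~\ref{lem:func}: because the divisor $D$ of a degree $d$ point is \emph{irreducible}, any non-constant $f\in L(D)$ has pole divisor a non-zero rational effective divisor $\le D$, hence equal to $D$; so one gets a single degree $d$ morphism $f$ with $f^*(\infty)=D$ on the nose, with no fixed part and no choice of pencil. Castelnuovo--Severi (as in the proof of Corollary~\ref{cor:relative}, diagram \eqref{eqn:diagram2}) then forces $f=u\circ\pi$ with $\deg u=d/2$, and transitivity of the Galois action on $f^*(\infty)$ descends to $u^*(\infty)\ni\pi(P)$, giving $\deg\Q(\pi(P))=d/2$ directly --- this is the bielliptic analogue of Corollary~\ref{cor:gonalityhyp}. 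Your substitute argument (splitting $|D_0|$ into fixed plus moving part, generic pencils, and the projection formula for $\pi_*\mathcal{O}_C$) is sound, and the two technical worries you flag can both be discharged: the fixed part $\mathfrak{b}$ is automatically zero once $|D_0|$ contains two distinct irreducible degree $d$ divisors, since distinct Galois orbits are disjoint; and you do not need $\phi$ itself to be $\iota$-invariant --- a single base-point-free pencil factoring through $\pi$ already shows the class of $D_1$ is a pullback $\pi^*\mathcal{N}$, after which your cohomological identity $h^0(C,\pi^*\mathcal{N})=h^0(E,\mathcal{N})$ finishes the job. So your route buys nothing extra here, but it is a legitimate alternative that avoids invoking irreducibility of $D$ until the very end.
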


The above theorems are concerned with finiteness criteria
for low degree primitive points.
In the opposite direction, we show that if the degree $d$ is sufficiently large compared to the genus,
then there are infinitely many primitive degree $d$ points, provided there is at least one such point.
\begin{thm}\label{thm:infprimitive}
Let $C/\Q$ be a curve.
Let $d \ge g+1$ where $g$ is the genus of $C$.
Suppose there exists a primitive
degree $d$ point on $C$. 
Then there are infinitely many primitive
degree $d$ points on $C$.
\end{thm}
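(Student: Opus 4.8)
\emph{Overview of the plan.} Let $P_0$ be the given primitive degree $d$ point, regarded also as the corresponding effective divisor of degree $d$ defined over $\Q$. The strategy is to exhibit $P_0$ as a fibre of a degree $d$ morphism $\phi\colon C\to\PP^1$ defined over $\Q$, to show that the arithmetic monodromy group of $\phi$ acts primitively on the $d$ sheets, and then to apply the Hilbert irreducibility theorem to produce infinitely many further fibres that are single degree $d$ points with primitive residue field. For the first step, note that $d\ge g+1$ forces $\ell(P_0)\ge d-g+1\ge 2$ by Riemann--Roch, so the complete linear system $|P_0|\cong\PP^r_\Q$ has dimension $r\ge 1$ and hence infinitely many $\Q$-rational members; choose one, say $D\ne P_0$. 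Because $P_0$ is a prime divisor, $\gcd(P_0,D)$ is either $0$ or $P_0$, and it equals $P_0$ only if $P_0\le D$, which (degrees being equal) would give $D=P_0$; thus $\gcd(P_0,D)=0$. Therefore the pencil $\langle P_0,D\rangle$ is base-point-free and defines a morphism $\phi\colon C\to\PP^1$ over $\Q$ of degree $d$ whose fibres are exactly its members; in particular $P_0=\phi^{-1}(t_0)$ for some $t_0\in\PP^1(\Q)$, and the scheme-theoretic fibre over $t_0$, being an effective divisor of degree $d$ equal to the prime divisor $P_0$, is reduced, so $\phi$ is unramified over $t_0$.

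\emph{Primitivity of the monodromy group.} Let $\Gamma\le S_d$ be the arithmetic monodromy group of $\phi$, that is, the Galois group of the Galois closure of $\Q(C)/\Q(t)$ acting on the $d$ points of the generic fibre. Since $\phi$ is unramified over $t_0$, specialisation at $t_0$ identifies the residue extension with the Galois closure of $\Q(P_0)/\Q$ and embeds the Galois group $G_0$ of $P_0$ into $\Gamma$ compatibly with the action on the $d$ letters. Now $G_0$ is transitive because $\Q(P_0)$ is a field, and it is primitive because $\Q(P_0)$ is a primitive number field: intermediate fields of $\Q(P_0)/\Q$ correspond to subgroups between the point stabiliser and $G_0$, so primitivity of $\Q(P_0)$ says exactly that the point stabiliser is maximal. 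A block system for $\Gamma$ restricts to a block system for the subgroup $G_0$; since $G_0$ admits only the trivial ones and $\Gamma\supseteq G_0$ is transitive, $\Gamma$ is primitive as well.

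\emph{Producing infinitely many points, and the main difficulty.} By the Hilbert irreducibility theorem over the Hilbertian field $\Q$, for all $t_1\in\PP^1(\Q)$ outside a thin set --- hence for infinitely many $t_1$ --- the fibre $\phi^{-1}(t_1)$ is a single closed point $P_{t_1}$ of degree $d$ whose Galois group is isomorphic, as a permutation group on $d$ letters, to $\Gamma$. As $\Gamma$ is primitive, $\Q(P_{t_1})$ has no proper intermediate subfield, so each $P_{t_1}$ is a primitive degree $d$ point, and distinct $t_1$ give distinct points since they are fibres over distinct base points. This gives infinitely many primitive degree $d$ points on $C$. I expect the genuinely delicate step to be the middle one: carefully translating ``$\Q(P_0)$ is a primitive number field'' into ``$G_0$ acts primitively on the $d$ embeddings'', and verifying that specialisation at $t_0$ really does embed $G_0$ into the monodromy group $\Gamma$ in a way that respects these actions, so that primitivity propagates upward to $\Gamma$. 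The geometric input (that a prime divisor of degree $d$ automatically has base-point-free complete linear system once $d\ge g+1$) and the Hilbert irreducibility step are then routine.
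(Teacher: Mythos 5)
Your proposal is correct and follows essentially the same route as the paper: Riemann--Roch gives $\ell(P_0)\ge d-g+1\ge 2$, the irreducible divisor is realised as an unramified fibre of a degree $d$ map $C\to\PP^1$ over $\Q$ (the paper's Lemma~\ref{lem:func}), the decomposition group at that fibre is identified with $\Gal(\widetilde{\Q(P_0)}/\Q)$ inside the monodromy group so that primitivity of the point stabiliser propagates to the larger transitive group (the paper's Lemmas~\ref{lem:bigger}--\ref{lem:primGal} and Proposition~\ref{prop:morph}), and Hilbert irreducibility finishes. The step you flag as delicate --- the compatibility of the specialisation map with the permutation actions --- is precisely where the paper invests most of its effort, via the decomposition-group formalism for the unramified place $\tilde{D}$ in the Galois closure $\tilde{\KK}/\Q(\PP^1)$.
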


The paper is organized as follows. In Section~\ref{sec:prooftrivjac}
we review some standard results regarding Riemann--Roch spaces
and we use these to prove Theorem~\ref{thm:trivialjac}.
In Section~\ref{sec:CS} we recall the Castelnuovo--Severi
theorem and use it
to give proofs of Theorems~\ref{thm:isolated}
and~\ref{thm:relativeisolated}.
In Section~\ref{sec:primDiv}
we give criteria for when 
a complete linear series does not contain
any primitive divisors (a primitive divisor
is simply the Galois orbit of a primitive algebraic
point). 
Let $C$ satisfy the hypotheses of either Theorem~\ref{thm:gonality}
or Theorem~\ref{thm:relative},
and write $C^{(d)}$ for the $d$-th
symmetric power of $C$. 
In Section~\ref{sec:decomp},
using a powerful theorem of Faltings, 
we show that $C^{(d)}(\Q)$ may be decomposed as a finite
union of complete linear series. 
In Section~\ref{sec:proofs}
we prove Theorems~\ref{thm:gonality} and~\ref{thm:relative}
and their corollaries, making use of the results
developed in previous sections.
In Section~\ref{sec:infprimitive} we recall the relationship
between primitive extensions and primitive group actions,
and we use this together with Hilbert's irreducibility
theorem to give a proof of Theorem~\ref{thm:infprimitive}.
Finally, in Sections~\ref{sec:modular1} and~\ref{sec:modular2}
we give consequences of Theorems~\ref{thm:gonality}
and~\ref{thm:relative} (and their proofs) for the modular curves
$X_1(N)$ and $X_0(N)$ for certain small $N$.

We are grateful to Nils Bruin, Victor Flynn, Samuel Le Fourn, David Loeffler, Filip Najman,
Petar Orli\'{c},
Damiano Testa and Bianca Viray for helpful discussions,
and to the referees for suggesting many improvements.

\section{Proof of Theorem~\ref{thm:trivialjac}}\label{sec:prooftrivjac}
Let $C$ be a curve defined
over $\Q$. When we speak of divisors on $C$ we in fact mean rational divisors:
a \textbf{divisor} on $C/\Q$ is a finite formal integral linear combination $D=\sum a_i P_i$ of algebraic points $P_i$ that is stable
under the action of $\Gal(\overline{\Q}/\Q)$. We call this divisor \textbf{effective} and write
$D \ge 0$ if and only if $a_i \ge 0$ for all $i$.  An \textbf{irreducible divisor} is an effective divisor that 
cannot be written as the sum of two non-zero effective divisors. Thus an effective degree $d$ divisor $D$
is irreducible if and only if there is a degree $d$ point $P \in C(\overline{\Q})$ such that
$D=P_1+P_2+\cdots+P_d$ where $\{P_1,\dotsc,P_d\}$ is the Galois orbit of $P$. We say that $D$ is \textbf{the 
irreducible divisor corresponding} to $P$.

For a divisor $D$ on $C$ we denote by
$L(D)$ the corresponding \textbf{Riemann--Roch space} defined by
\[
L(D) \; = \; \{0\} \cup \{ f \in \Q(C)^\times \; : \; \divv(f)+D \ge 0\},
\]
and we let $\ell(D)=\dim L(D)$. 
We shall make frequent use of the Riemann--Roch theorem \cite[page 13]{ACGH} which 
asserts that
\begin{equation}\label{eqn:RR}
	\ell(D)-\ell(K_C-D) \; = \; \deg(D)-g+1;
\end{equation}
here $K_C$ is any canonical divisor on $C$, and $g$ is the genus of $C$.
Recall that $\deg(K_C)=2g-2$. Therefore, if $\deg(D) \ge 2g-1$ then
$K_C-D$ has negative degree and cannot be linearly equivalent to an
effective divisor. In that case $\ell(K_C-D)=0$.

We shall also require 
Clifford's theorem \cite[Theorem IV.5.4]{Hartshorne} on special
effective divisors.
Recall that
an effective divisor $D$ is \textbf{special} if $\ell(K_C-D)>0$.
\begin{thm}[Clifford]\label{thm:Clifford}
Let $D$ be an effective special divisor on a curve $C$. Then
\[
	\ell(D) \; \le \; \frac{\deg(D)}{2} \, + \, 1.
\]
Moreover, equality occurs if and only if $D=0$, or $D$ is a canonical divisor,
or $C$ is hyperelliptic and $D$ is a multiple of a hyperelliptic divisor.
\end{thm}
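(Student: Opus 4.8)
The plan is to prove the inequality first, via a sub-multiplicativity estimate for Riemann--Roch spaces, and then to read off the three equality cases from the geometry of the morphism attached to the complete linear system $|D|$.

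The heart of the inequality is the following lemma: for any effective divisors $A$ and $B$ on $C$ one has $\ell(A)+\ell(B)\le \ell(A+B)+1$. I would prove this geometrically. Multiplication of rational functions induces a morphism of projective spaces $|A|\times|B|\to|A+B|$, $(A',B')\mapsto A'+B'$, which is defined everywhere because the product of two nonzero rational functions on the irreducible curve $C$ is nonzero. A fibre of this morphism over a divisor $F$ consists of the ways of writing $F=A'+B'$ with $A'$ an effective sub-divisor of $F$ of degree $\deg(A)$; as $F$ has only finitely many effective sub-divisors, the morphism has finite fibres, so its image has dimension exactly $\dim|A|+\dim|B|$. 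Since that image lies inside $|A+B|$, we get $\dim|A+B|\ge\dim|A|+\dim|B|$, which is the lemma. Now, since $D$ is special, $K_C-D$ is linearly equivalent to an effective divisor $E$; applying the lemma with $A=D$, $B=E$ gives $\ell(D)+\ell(E)\le\ell(K_C)+1=g+1$, while Riemann--Roch gives $\ell(D)-\ell(E)=\deg(D)-g+1$. Adding these two relations yields $2\ell(D)\le\deg(D)+2$, as required.

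For the equality statement, that each of the three listed divisors attains equality is a direct computation: the cases $D=0$ and $D\sim K_C$ are immediate, and if $C$ is hyperelliptic with $D\sim m\mathfrak{g}$ for a hyperelliptic divisor $\mathfrak{g}$ and $0\le m\le g-1$, then, using that the canonical class of a hyperelliptic curve is $(g-1)\mathfrak{g}$, one checks that $1,x,\dots,x^m$ span $L(D)$, where $x$ is the degree-$2$ function with polar divisor $\mathfrak{g}$, so $\ell(D)=m+1=\deg(D)/2+1$. For the converse, suppose $\ell(D)=\deg(D)/2+1$ with $D$ special, $D\ne 0$, and $D\not\sim K_C$, and write $r=\dim|D|$. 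Integrality of $\deg(D)/2+1$ forces $\deg(D)=2r$; using Riemann--Roch together with the excluded cases and the speciality of $D$, one checks that $1\le r\le g-2$, so $|K_C-D|$ also has positive dimension and in particular $g\ge r+2$. Moreover $|D|$ is base-point-free: a base point $P$ would make $D-P$ a special divisor with $\ell(D-P)=\ell(D)$, which violates the inequality just proved applied to $D-P$ (of degree $2r-1$). Hence $|D|$ defines a morphism $\phi:C\to\PP^r$ with nondegenerate image $C'$; writing $\nu$ for the degree of $\phi$ onto $C'$ and $d'=\deg C'$, we have $\nu d'=\deg(D)=2r$, while $d'\ge r$ since a nondegenerate curve in $\PP^r$ has degree at least $r$. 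Therefore $\nu\in\{1,2\}$.

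If $\nu=1$ then $C$ is birational to a nondegenerate curve of degree $2r$ in $\PP^r$, whose arithmetic genus is at most $r+1$ by Castelnuovo's bound, contradicting $g\ge r+2$. So $\nu=2$ and $d'=r$, whence $C'$ is a nondegenerate curve of minimal degree in $\PP^r$, i.e.\ a rational normal curve, so $C'\cong\PP^1$; thus $\phi$ factors through a degree-$2$ morphism $x:C\to\PP^1$, so $C$ is hyperelliptic, and pulling back $\OO_{\PP^r}(1)$ along $x$ shows $D\sim r\cdot x^{*}(\text{point})$, i.e.\ $D$ is $r$ times a hyperelliptic divisor. The routine parts here are the inequality and the verification that the three families attain equality; the substantive work is the converse, and within it the step ruling out $\nu=1$. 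I expect that to be the main obstacle: it is precisely where one needs input beyond Riemann--Roch, namely the classification of curves of small degree in projective space (the rational normal curve as the unique nondegenerate curve of minimal degree, together with Castelnuovo's genus bound). An alternative to the Castelnuovo step is a more delicate induction on $\deg(D)$ peeling off pairs of points while preserving Clifford-extremality, but the projective-geometric argument above seems cleanest.
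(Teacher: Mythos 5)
Your proof is correct. Note first that the paper does not prove this statement at all: Clifford's theorem is quoted as a classical result with a citation to Hartshorne, Theorem IV.5.4, so there is no in-paper argument to compare against. Measured against the standard references, your inequality argument (the estimate $\ell(A)+\ell(B)\le\ell(A+B)+1$ via the finite-fibre multiplication map $\lvert A\rvert\times\lvert B\rvert\to\lvert A+B\rvert$, combined with Riemann--Roch applied to $D$ and $K_C-D$) is exactly Hartshorne's Lemma IV.5.5 and his deduction of the bound. For the equality case you diverge from Hartshorne, who argues by choosing a Clifford-extremal divisor of minimal degree to produce a $g^1_2$ and then shows every extremal divisor is a multiple of it; you instead take the projective-geometric route of ACGH: base-point-freeness of $\lvert D\rvert$, the resulting morphism $\phi:C\to\PP^r$ with $\nu d'=2r$ and $d'\ge r$, exclusion of the birational case by Castelnuovo's genus bound ($\pi(2r,r)=r+1<r+2\le g$), and identification of the image as a rational normal curve. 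Both routes are standard and neither is circular (the general-position input to Castelnuovo's bound does not use Clifford); yours invokes heavier classical projective geometry but yields the structure of $\lvert D\rvert$ more transparently. Two small points of hygiene: in verifying that the hyperelliptic examples attain equality you correctly impose $0\le m\le g-1$, a restriction the paper's statement leaves implicit in the word ``special''; and your conclusion $D\sim r\cdot x^*(\mathrm{point})$ is the correct one --- the paper's phrase ``$D$ is a multiple of a hyperelliptic divisor'' must be read up to linear equivalence, since $\ell$ depends only on the linear equivalence class.
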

Recall that a hyperelliptic curve $C$ is equipped
with a degree $2$ morphism $\pi : C \rightarrow \PP^1$;
a \textbf{hyperelliptic divisor} on $C$ is $\pi^*(\alpha)$ for any $\alpha 
\in \PP^1(\Q)$.

\begin{proof}[Proof of Theorem~\ref{thm:trivialjac}]
Let $C$ be as in the statement of Theorem~\ref{thm:trivialjac}.
We may suppose $C$ has an affine model as in \eqref{eqn:hyper}
where $F \in \Q[X]$ is a squarefree polynomial of degree $2g+1$
or $2g+2$. It follows from Lemma~\ref{lem:hypimprim}
and its proof that $C$ has infinitely many quartic points with
Galois group contained in $D_4$.
To complete the proof it is enough to show that there
are no quartic points on $C$ with Galois group $S_4$
or $A_4$.

If $\deg(F)=2g+1$ we let $\infty$ be the single point
at infinity on this model, and write $D_0=4\infty$.
If $\deg(F)=2g+2$ we let $\infty_+$ and $\infty_{-}$ be the two points at infinity,
and write $D_{0}=2\infty_{+}+2\infty_{-}$. 
In either case $D_0$ is twice a hyperelliptic divisor.

Let $P$ be a degree $4$ point on $C$, 
and let $D$ be the corresponding irreducible divisor. 
	Since $J(\Q)$ is trivial, $ D-D_{0} \, \sim \, 0$ where $\sim$ denotes linear equivalence on $C$.
That is,
\[
        D=D_{0}+\divv(f),
\]
where $f\in L(D_{0})$. 
We claim that $1,X,X^2$ is a $\Q$-basis of $L(D_0)$. Let us first assume our claim and use it to complete the proof.
Thus $f=a_0+a_1 X + a_2 X^2$ for some $a_0$, $a_1$, $a_2 \in \Q$. Moreover,
$f$ is non-constant as $D \ne D_0$. Now $P$ is a zero of $f$. Hence $X(P)$ satisfies
the non-constant polynomial $a_0+a_1 U+a_2 U^2 \in \Q[U]$. Since $\Q(P)=\Q(X(P),Y(P))$
is a quartic field, and $Y(P)^2=F(X(P))$, we see that $\Q(X(P))$
is quadratic and contained in the quartic field $\Q(P)$. Therefore the
Galois group of $P$ is neither $S_4$ nor $A_4$.

It remains to prove our claim. Note that $X$ has a double pole
	at infinity and no other poles if $\deg(F)=2g+1$;
	and also
$X$ has a simple pole at $\infty_+$ and $\infty_{-}$, and has no other poles
if $\deg(F)=2g+2$. Therefore, 
	$1$, $X$, $X^2$ belong to $L(D_{0})$, and so $\ell(D_0) \ge 3$.
	It is enough to show that $\ell(D_0)=3$.
We now make use of our assumption that $g=2$ or $3$. If $g=2$ then
	Riemann--Roch immediately gives $\ell(D_0)=3$.
	Suppose $g=3$. Then Riemann--Roch tells us that $D_0$ is special,
	and since it is twice a hyperelliptic divisor, Clifford's theorem
	gives the equality $\ell(D_0)=3$.
\end{proof}

\section{Proofs of Theorems~\ref{thm:isolated}
and~\ref{thm:relativeisolated}}
\label{sec:CS}
%

We start by recalling the classical Castelnuovo--Severi theorem.
\begin{thm}[Castelnuovo--Severi theorem]
Let $k$ be a perfect field, and let $X$, $Y$, $Z$
be curves over $k$. Denote the genera of these
curves by $g(X)$, $g(Y)$ and $g(Z)$
respectively. Let $\pi_Y : X \rightarrow Y$
and $\pi_Z: X \rightarrow Z$
be non-constant morphisms defined over $k$,
having degrees $m$ and $n$ respectively.
Suppose
\begin{equation}\label{eqn:assumption}
	g(X) \; > \;
	m \cdot g(Y) + n \cdot g(Z) +(m-1)(n-1).
\end{equation}
Then there is a curve $X^\prime$ defined over $k$, and a morphism
$X \rightarrow X^\prime$ 
also defined over $k$
and of degree $>1$ through which both $\pi_Y$ and $\pi_Z$
factor. 
\end{thm}
\begin{proof}
We are unable to find a reference that gives the precise statement
that we need. Indeed the theorem is most 
often given in the context of complex Riemann surfaces (for example \cite{Accola}),
or the field of definition of the morphism $X \rightarrow X^\prime$
is not mentioned (for example \cite[Corollary]{Kani}). We therefore give an explanation of how
the version above follows straightforwardly 
from a proof due to Mattuck and Tate \cite{Mattuck_Tate}
of the Castelnuovo--Severi inequality.

Let $S=Y \times Z$. Given two divisors $D$, $D^\prime$ on the surface $S$,
we denote their intersection number by $D \cdot D^\prime$.
Let $D$ be the image of $X$ on  $S$
under the morphism $(\pi_Y,\pi_Z) : X \rightarrow S$,
and write $h : X \rightarrow D$ for the induced map.
Clearly the curve $D$ and the map $h$
are defined over $k$. Let $u: D \rightarrow Y$
and $v : D \rightarrow Z$ be the restrictions
of the projections  $Y \times Z \rightarrow Y$ and $Y \times Z \rightarrow Z$
to $D$.
Then $\pi_Y=u \circ h$ and $\pi_Z=v \circ h$.
We claim that $\deg(h) > 1$. The theorem follows from our claim
on letting $X^\prime$ be the normalization of $D$. 

We prove our
claim by contradiction. Suppose $\deg(h)=1$. The $h$ is a birational map,
and $g(D)=g(X)$ (where $g(D)$ denotes the geometric genus of $D$).
Mattuck and Tate \cite[page 296]{Mattuck_Tate}
show that
\begin{equation}\label{eqn:Mattuck_Tate}
	D \cdot D \; \le \; 2 mn, \qquad D \cdot K_S \; = \; (2g(Y)-2) m+
	(2 g(Z)-2)n,
\end{equation}
	where $K_S$ denotes any canonical divisor on $S$ (the inequality
	on the left is itself referred to as the Castelnuovo--Severi inequality). By the adjunction formula for surfaces
\cite[Exercise V.1.3]{Hartshorne}  we have
\[
	2 p_a(D)-2 \; = \; D \cdot (D+K_S),
\]
where $p_a(D)$ is the arithmetic genus of $D$. Thus, from \eqref{eqn:Mattuck_Tate},
\[
	p_a(D) \; \le \; g(Y) m + g(Z) n + (m-1)(n-1).
\]
However, since the geometric genus is bounded by the arithmetic genus,
\[
	g(X) \; = \; g(D)  \; \le \; p_a(D) \; \le \; g(Y) m + g(Z) n + (m-1)(n-1).
\]
contradicting assumption \eqref{eqn:assumption}.
This establishes our claim.
\end{proof}

\begin{proof}[Proof of Theorem~\ref{thm:isolated}]
Let $P$ be a degree $d$ point on $C$ and suppose $P$ 
is not $\PP^1$-isolated. Thus there is a degree $d$
map $f : C \rightarrow \PP^1$, defined over $\Q$,
such that $f(P) = \alpha \in \PP^1(\Q)$.
Observe that $f^*(\alpha)$ consists precisely
of the Galois orbit of $P$.
Let $m \ge 2$ be the $\Q$-gonality of $C$; thus
in particular, there is a morphism $\pi : C \rightarrow \PP^1$
defined over $\Q$, of minimal degree $m \ge 2$.
As $m$ and $d$ satisfy \eqref{eqn:gencondgon},
the Castelnuovo--Severi theorem immediately implies that 
the morphisms $f$, $\pi$ must simultaneously factor through
a common non-trivial morphism of curves. 
We obtain a commutative diagram of non-constant morphisms of curves
\begin{equation}\label{eqn:diagram1}
\begin{tikzcd}
	& C \arrow[ld, "f" above] \arrow[d, "h"] \arrow[rd, "\pi"] & \\
	\PP^1 & Y \arrow[l, "u"] \arrow[r, "v" below] & \PP^1
\end{tikzcd}
\end{equation}
defined over $\Q$, where $\deg(h)>1$. 
Write $d^\prime=\deg(h)$. Then $d^\prime$ divides
both $d=\deg(f)$ and $m=\deg(\pi)$, so $d^\prime \mid \gcd(d,m)$. 
If $d^\prime=d$, then $d \mid m$. However, $m \le d$
by the minimality of $m$. Therefore $m=d$ contradicting
\eqref{eqn:gencondgon}. We deduce that $d^\prime<d$.
Let $Q=h(P) \in Y$. Since $\Gal(\overline{\Q}/\Q)$
acts transitively on $f^*(\alpha) \ni P$, it acts transitively
on $u^*(\alpha) \ni Q$. Hence $Q$ has degree $\deg(u)=d/d^\prime$
and $\Q(Q) \subseteq \Q(P)$. 
	Thus the field $\Q(P)$ of degree $d$ contains the
	subfield $\Q(Q)$ of index $d^\prime$. The theorem follows.
%
\end{proof}

\begin{proof}[Proof of Theorem~\ref{thm:relativeisolated}]
This proof is similar to the proof of Theorem~\ref{thm:isolated}.
As before, let $P$ be a 
degree $d$ point on $C$ and suppose that $P$ is not $\PP^1$-isolated.
Thus there is a degree $d$
map $f : C \rightarrow \PP^1$, defined over $\Q$,
such that $f(P) = \alpha \in \PP^1(\Q)$.


From assumption \eqref{eqn:gencondrel},
the Castelnuovo--Severi theorem gives
a commutative diagram of non-constant morphisms of curves
\begin{equation}\label{eqn:diagram2}
\begin{tikzcd}
	& C \arrow[ld, "f" above] \arrow[d, "h"] \arrow[rd, "\pi"] & \\
	\PP^1 & Y \arrow[l, "u"] \arrow[r, "v" below] & C^\prime 
\end{tikzcd}
\end{equation}
defined over $\Q$, where $\deg(h)>1$.
As before we let $d^\prime=\deg(h)$.
If $\deg(u)=1$ then $Y \cong \PP^1$ 
which contradicts the existence of 
a non-constant morphism $v$ from $Y$
to the curve $C^\prime$ having genus $g^\prime \ge 1$.
Thus $\deg(u)>1$ and $d^\prime=d/\deg(u)<d$.
The rest of the proof is similar to the 
proof of Theorem~\ref{thm:isolated}.
\end{proof}

\section{Primitive Divisors in Complete Linear Series}
\label{sec:primDiv}
Let $C$ be a curve over $\Q$.
For an effective divisor $D$ on $C$
the notation $\lvert D \rvert$ denotes the \textbf{complete linear series
containing $D$}:
\[
	\lvert D \rvert = \{ D+\divv(f) : f \in L(D)  \};
\]
this is precisely the set of effective divisors linearly equivalent to $D$.
Observe that for effective divisors $D$, $D^\prime$,
we have $D \sim D^\prime$ if and only if $\lvert D \rvert=\lvert D^\prime \rvert$.
Recall that $\lvert D \rvert \cong \PP^{\ell(D)-1}(\Q)$.
Note that $\ell(D) \ge 1$ since $\Q \subseteq L(D)$ for any effective divisor $D$.
In particular, $\lvert D \rvert=\{D\}$
if and only if $\ell(D)=1$.

The following well-known lemma highlights the 
relationship
between a point being not $\PP^1$-isolated
and the complete linear series of its irreducible
divisor having positive dimension.
\begin{lem}\label{lem:func}
Let $C$ be a curve defined over $\Q$, and let $d \ge 1$.
Let $D$ be an irreducible degree $d$ divisor.
The following are equivalent.
\begin{enumerate}[(a)]
\item $\dim \lvert D \rvert \ge 1$.
\item $\ell(D) \ge 2$.
\item There is a degree $d$ morphism $f : C \rightarrow \PP^1$,
defined over $\Q$, such that $f^*(\infty)=D$.
\item Any $P \in D$ is not $\PP^1$-isolated.
\end{enumerate}
\end{lem}
\begin{proof}
Recall that $\lvert D \rvert \cong \PP^{\ell(D)-1}(\Q)$.
Thus $\dim \lvert D \rvert=\ell(D)-1$
and therefore (a) and (b) are equivalent.

Suppose $\ell(D) \ge 2$. Then there is some non-constant $f \in L(D)$.
Thus $f \in \Q(C)^\times$ satisfies $\divv(f)+D \ge 0$. Write $\divv_\infty(f)$
for the divisor of poles of $f$. As $f$ is non-constant,
$0<\divv_\infty(f)$. As $f$ is defined over $\Q$, we have $\divv_\infty(f)$
is a rational divisor. Moreover, $\divv_\infty(f) \le D$ since $\divv(f)+D \ge 0$.
Since $D$ is irreducible, $\divv_\infty(f)=D$.
Now we consider $f$ as a morphism $C \rightarrow \PP^1$ defined over $\Q$.
	Then $f^*(\infty)=D$. As $D$ has degree $d$ so does $f$. Thus (b)
	implies (c).

Conversely, suppose (c). Then $f \in L(D)$ and non-constant giving (b).
	
	It is clear that (c) implies (d). To complete the proof
	we suppose (d) and prove (c). Thus there is
	a morphism $f : C \rightarrow \PP^1$ of degree $d$
defined over $\Q$ with $f(P)=\alpha \in \PP^1(\Q)$.
Composing $f$ with an automorphism of $\PP^1$ we may
suppose $\alpha=\infty$. Now $P \in \divv_\infty(f)$,
and so $D \le \divv_\infty(f)$ as $\divv_\infty(f)$ is stable under the action
of Galois. Thus $D=\divv_\infty(f)$ as both divisors have degree $d$.
This completes the proof.
\end{proof}

We will call an irreducible divisor \textbf{primitive}
if it is the Galois orbit of a primitive point.
\begin{cor}\label{cor:isolated}
Let $C$ be a curve defined over $\Q$ with genus $g$ and $\Q$-gonality $m \ge 2$.
Let $d \ge 2$ be an integer satisfying \eqref{eqn:gencondgon}. 
Let $D^\prime$ be an effective degree $d$ divisor 
on $C$ with $\ell(D^\prime) \ge 2$.
Then $\lvert D^\prime \rvert$ contains no 
primitive degree $d$ divisors. 
Moreover, if $\gcd(d,m)=1$ or $d$ is prime, then $\lvert D^\prime \rvert$ contains no irreducible divisors.
\end{cor}
\begin{proof}
Suppose $D \in \lvert D^\prime \rvert$ is an irreducible divisor.
Then $\ell(D)=\ell(D^\prime) \ge 2$. By Lemma~\ref{lem:func},
any $P \in D$ is not $\PP^1$-isolated. 
By part (II) of Theorem~\ref{thm:isolated}
we see that $P$ is imprimitive. Thus $\lvert D^\prime\rvert$
contains no primitive divisors.

Suppose now that $\gcd(d,m)=1$ or $d$ is prime. 
Then part (I)
of Theorem~\ref{thm:isolated} gives a contradiction,
therefore $\lvert D^\prime\rvert$ contains no irreducible
divisors.
\end{proof}

The following variant of Corollary~\ref{cor:isolated} has an almost
identical proof, but appealing to Theorem~\ref{thm:relativeisolated}
instead of Theorem~\ref{thm:isolated}.
\begin{cor}\label{cor:relativeisolated}
Let $\pi : C \rightarrow C^\prime$ be a morphism of curves defined over $\Q$ of degree $m \ge 2$.
Write $g$, $g^\prime$ for the genera of $C$, $C^\prime$
respectively, and suppose $g^\prime \ge 1$.
Let $d \ge 2$ be an integer satisfying \eqref{eqn:gencondrel}.
Let $D^\prime$ be an effective degree $d$ divisor 
on $C$ with $\ell(D^\prime) \ge 2$.
Then $\lvert D^\prime \rvert$ contains no 
primitive degree $d$ divisors. 
Moreover, if $\gcd(d,m)=1$ or $d$ is prime, then $\lvert D^\prime \rvert$ contains no irreducible divisors.
\end{cor}

\section{Decomposition of $C^{(d)}$ into complete linear series}
\label{sec:decomp}

We denote the $d$-th symmetric power of $C$ by $C^{(d)}$. Recall that
$C^{(d)}(\Q)$ can be identified with the set of 
effective degree $d$ divisors on $C$.
The purpose of this section is to prove the following proposition.
\begin{prop}\label{prop:decomp}
Let $C$ be a curve over $\Q$ of genus $g \ge 1$,
and let $J$ be the Jacobian of $C$. Let $d$ be a positive integer.
Suppose either of the following two conditions hold:
\begin{enumerate}[(a)]
	\item $J(\Q)$ is finite;
\item or $d \le g-1$,
and $A(\Q)$ is finite for every abelian subvariety $A/\Q$ of $J$ of dimension $\le d/2$.
\end{enumerate}
Then there are finitely many effective degree $d$ divisors
$D_1,D_2,\dotsc,D_n$ such that
\begin{equation}\label{eqn:decomp}
		C^{(d)}(\Q) \; = \; 
		\bigcup_{i=1}^n \lvert D_i \rvert.
\end{equation}
\end{prop}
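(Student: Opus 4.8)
The plan is to use the two standard finiteness theorems of Faltings: the Mordell conjecture for curves and Faltings' theorem on rational points on subvarieties of abelian varieties (the latter in the strong form describing $X(\Q)$ as a finite union of translated abelian subvarieties). First I would fix a rational base point or, more robustly, a rational divisor class of degree $d$; if $C$ has no rational divisor of degree $d$ then $C^{(d)}(\Q)=\emptyset$ and there is nothing to prove, so assume such a class exists. Using it, define the Abel--Jacobi map $\phi : C^{(d)} \to J$ sending an effective degree $d$ divisor $D$ to the class of $D$ minus the chosen reference divisor; this is a morphism of $\Q$-varieties, and two effective divisors have the same image if and only if they are linearly equivalent, i.e. lie in the same complete linear series $\lvert D \rvert$. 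Thus the fibres of $\phi$ over $\Q$-points are exactly the sets $\lvert D\rvert(\Q)$ appearing on the right-hand side of \eqref{eqn:decomp}, and the whole problem reduces to showing that $\phi\bigl(C^{(d)}(\Q)\bigr)$ is a \emph{finite} subset of $J(\Q)$.

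For this I would treat the two hypotheses separately. Under (a), $J(\Q)$ is finite, so $\phi(C^{(d)}(\Q))\subseteq J(\Q)$ is automatically finite and we are done immediately. Under (b), with $d\le g-1$ and $J$ simple, the image $W_d:=\phi(C^{(d)})$ is a closed subvariety of $J$ of dimension $d < g = \dim J$, hence a proper closed subvariety. By Faltings' big theorem, $W_d(\Q)$ is contained in a finite union of translates $a_j + B_j$ of abelian subvarieties $B_j\subseteq J$ with each $a_j+B_j\subseteq W_d$. Since $J$ is simple, each $B_j$ is either $0$ or all of $J$; the latter is impossible because $W_d$ is proper, so each $B_j=0$ and $W_d(\Q)$ is finite. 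In particular $\phi(C^{(d)}(\Q))\subseteq W_d(\Q)$ is finite. In either case, letting $c_1,\dotsc,c_n$ be the finitely many points of $\phi(C^{(d)}(\Q))$ and choosing for each $c_i$ an effective degree $d$ divisor $D_i$ with $\phi(D_i)=c_i$, every $D\in C^{(d)}(\Q)$ satisfies $\phi(D)=c_i$ for some $i$, hence $D\sim D_i$, hence $D\in\lvert D_i\rvert$; conversely each $\lvert D_i\rvert\subseteq C^{(d)}(\Q)$, giving \eqref{eqn:decomp}.

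The main obstacle, and the point needing the most care, is the reduction to a \emph{proper} subvariety of $J$ in case (b): one must verify that $\dim W_d = d$ (equivalently that $\phi$ does not collapse dimensions, which follows since a generic effective divisor of degree $\le g-1$ moves in a zero-dimensional linear series, so $\phi$ is generically finite onto its image) and that $W_d\subsetneq J$ (clear since $d<g$). A secondary technical point is the existence of the morphism $\phi$ over $\Q$ without a rational point on $C$: this is handled by working with a fixed rational divisor class of degree $d$, whose existence we may assume since otherwise $C^{(d)}(\Q)$ is empty. Everything else is formal bookkeeping about linear equivalence and fibres of $\phi$.
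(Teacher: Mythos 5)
Your proposal is correct and follows essentially the same route as the paper: reduce to finiteness of the image $W_d(C)(\Q)$ under the Abel--Jacobi map, which is immediate under (a), and under (b) follows from Faltings' theorem on subvarieties of abelian varieties since $W_d(C)$ is a proper ($d$-dimensional) subvariety of the simple abelian variety $J$. The only cosmetic difference is that you quote Faltings in the ``finite union of translates'' form while the paper uses the equivalent ``no translate of a positive-dimensional abelian subvariety'' form.
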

To prove the proposition we shall need the following famous theorem of Faltings \cite{Faltings_Lang}.
\begin{thm}[Faltings]\label{thm:Faltings}
Let $B$ be an abelian variety defined over a number field $K$,
and let $V \subset B$ be a subvariety defined over $K$.
Then there is a finite number of abelian subvarieties
$B_1, \dotsc, B_m$ of $B$, defined over $K$, and a finite number of points $x_1,\dotsc,x_m \in V(K)$
such that the translates $x_i+B_i$ are contained in $V$,
and, moreover, such that
\begin{equation}\label{eqn:Vdecomp}
V(K)=\bigcup_{i=1}^m x_i+B_i(K).
\end{equation}
\end{thm}
We shall also need the following theorem of Debarre and Fahlaoui \cite[Corolllary 3.6]{DebarreFahlaoui}.
\begin{thm}[Debarre and Fahlaoui]
Let $C/\mathbb{C}$ be a curve of genus $g \ge 1$ with Jacobian $J$, and let $d \le g-1$.
Let $D_0$ be a fixed divisor of degree $d$ on $C$. Let $W_d(C)$
be the image of $C^{(d)}$ under the Abel--Jacobi map
\begin{equation}\label{eqn:iota}
		\iota \; : \; C^{(d)} \rightarrow J,
		\qquad D \mapsto [D-D_0].
\end{equation}
Let $A$ be an abelian subvariety of $J$ with a translate contained in $W_d(C)$.
Then $\dim(A) \le d/2$.
\end{thm}

\begin{proof}[Proof of Proposition~\ref{prop:decomp}]
If $C^{(d)}(\Q)=\emptyset$
then we take $n=0$ and there is nothing to prove. So suppose
$C^{(d)}(\Q) \ne \emptyset$ and fix
	$D_0 \in C^{(d)}(\Q)$. Let $W_d(C)$ be
the image of $C^{(d)}$ under the Abel--Jacobi map \eqref{eqn:iota},
which is defined over $\Q$.
	We claim that $W_d(C)(\Q)$ is finite. 
This is trivially true if (a) holds, so suppose (b).
In particular $d \le g-1$ and so $W_d(C)$
is birational to $C^{(d)}$ (e.g. \cite[Theorem 5.1]{MilneJ}) and so has dimension $d$.
We apply Falting's Theorem with $B=J$ and $V=W_d(C)$. 
Thus, there are $x_1,\dotsc,x_m \in W_d(C)(\Q)$
and $B_1,\dotsc,B_m$ abelian subvarieties of $J$ defined over $\Q$
such that $x_i+B_i \subset W_d(C)$ and 
\[
W_d(C)(\Q)=\bigcup_{i=1}^m x_i+B_i(\Q).
\]
By the theorem of Debarre and Fahlaoui, $\dim(B_i) \le d/2$.
However, by assumption (b), $B_i(\Q)$ is finite. Hence
$W_d(C)(\Q)$ is finite,  proving our claim.

	Let $W_d(C)(\Q)=\{R_1,\dotsc,R_n\}$ and choose $D_1,\dotsc,D_n
\in C^{(d)}(\Q)$ mapping to $R_1,\dotsc,R_n$ respectively.
	If $D \in C^{(d)}(\Q)$ then $\iota(D)=R_i$ for some $i$,
	and so $[D-D_0]=[D_i-D_0]$. Hence $[D-D_i]=0$,
	so $D \in \lvert D_i\rvert$. This completes the proof.
\end{proof}
Observe that if $C$ and $d$ satisfy the hypotheses
of either Theorem~\ref{thm:gonality} or~\ref{thm:relative}
then they satisfy the hypotheses of Proposition~\ref{prop:decomp}
and therefore $C^{(d)}(\Q)$ can be decomposed into the union of finitely many complete linear series as in \eqref{eqn:decomp}.

\section{Proofs of Theorems~\ref{thm:gonality} and~\ref{thm:relative}
and their corollaries} \label{sec:proofs}

\begin{proof}[Proof of Theorem~\ref{thm:gonality}]
Let $C$, $m$, $d$ be as in the statement of Theorem~\ref{thm:gonality}.
By Proposition~\ref{prop:decomp}, 
$C^{(d)}(\Q)$ has a finite decomposition, as in \eqref{eqn:decomp}
where $D_1,\dotsc,D_n$ are effective degree $d$ divisors.
If $\ell(D_i) \ge 2$ then, by Corollary~\ref{cor:isolated},
the complete linear series $\lvert D_i \rvert$ does not contain primitive divisors.
On the other hand, if $\ell(D_i)=1$ then $\lvert D_i \rvert =\{D_i\}$.
Hence there are only finitely many primitive degree $d$ points on $C$.

Suppose now that $\gcd(d,m)=1$ or $d$ is prime.
Again, if $\ell(D_i) \ge 2$ then, by Corollary~\ref{cor:isolated},
the complete linear series $\lvert D_i \rvert$ does not
contain irreducible divisors. The theorem follows.
\end{proof}

\begin{proof}[Proof of Corollary~\ref{cor:hyp}]
Let $C/\Q$ be hyperelliptic of genus $g$.
This has gonality $m=2$. Suppose either of 
hypotheses (a), (b) of Corollary~\ref{cor:hyp}
is satisfied. 
Then $C$, $g$, $m$, $d$ satisfy the hypotheses
of Theorem~\ref{thm:gonality}.
In particular, if $d$ is odd then $C$ has finitely
many degree $d$ points.

Suppose $d$ is even. 
By Proposition~\ref{prop:decomp},
we have that \eqref{eqn:decomp} holds
where $D_1,\dotsc,D_n$ is a finite 
set of effective degree $d$ divisors on $C$.
Let $P$ be a degree $d$ point and let
$D$ be the corresponding irreducible
divisor. Then $D \in \lvert D_i \rvert$
for some $i$. Suppose $D \ne D_i$. Then
$\ell(D) \ge 2$ and so by Lemma~\ref{lem:func}
the point $P$ is not $\PP^1$-isolated.
It follows from Theorem~\ref{thm:isolated}
that $\Q(P)$ contains a subfield
of index $d^\prime=2$. Thus, for all but
finitely many degree $d$ points $P$
we have that $\Q(P)$ contains
a subfield of index $2$.
\end{proof}

\begin{proof}[Proof of Theorem~\ref{thm:relative}]
This follows from Proposition~\ref{prop:decomp} and 
Corollary~\ref{cor:relativeisolated} by  trivial
modifications to the proof of Theorem~\ref{thm:gonality}.
\end{proof}

\begin{proof}[Proof of Corollary~\ref{cor:bielliptic}]
The proof is a straightforward modification of preceding
arguments.
%
\end{proof}

\subsection{A Remark on Effectivity}\label{subsec:effectivity}
Let $C$ and $d$ satisfy the hypotheses of Theorem~\ref{thm:gonality}
or Theorem~\ref{thm:relative}. Then $C^{(d)}(\Q)$
can be decomposed into a finite union of complete linear
systems as in \eqref{eqn:decomp}. Suppose that 
we are able to explicitly compute the representatives
$D_i$ in \eqref{eqn:decomp}. Then we have an effective
strategy for computing all primitive degree $d$ points.
Indeed, if $\ell(D_i) \ge 2$ then $\lvert D_i \rvert$
contains no primitive divisors by Corollary~\ref{cor:isolated}.
We are left to consider
$\lvert D_i \rvert$ for $\ell(D_i)=1$. However, if $\ell(D_i)=1$,
then $\lvert D_i \rvert=\{D_i\}$ and we simply need to test
$D_i$ to determine if it is the Galois orbit
of a primitive degree $d$ point. Moreover, if $\gcd(d,m)=1$ or $d$ is prime then we can compute all degree $d$ points by a slight modification
of the strategy: if $\ell(D_i)=1$ then simply
test $D_i$ for irreducibility.

We remark that the decomposition~\eqref{eqn:decomp}
can often be computed using symmetric power Chabauty
(e.g. \cite{ChabautySym} or \cite{BGG22}) provided $r+d \le g$ where
$r$ is the rank of the Mordell--Weil group $J(\Q)$.

\subsection{A Remark on the Assumptions of Corollary~\ref{cor:hyp}}
As noted previously, assumption (b) of Corollary~\ref{cor:hyp} is satisfied for $d \le g-1$
whenever the Jacobian $J$ is simple.
We remark that almost all hyperelliptic curves have simple Jacobians,
in a sense that we make precise shortly. 
For this we will need the following theorem of Zarhin~\cite[Theorem 1.1]{Zarhin2009}.
\begin{thm}[Zarhin]\label{thm:Zarhin}
Let $k$ be a field, $char(k)\neq 2$. 
Let $C: Y^2=f(X)$, where  $f\in k[X]$ is a separable 
polynomial of degree $n \ge 5$, and let $J$ denote the Jacobian of $C$.  Suppose
$char(k)\neq 3$ or $n\geq 7$, and 
that $f$ has Galois group either $S_{n}$ or $A_{n}$.
Then $\End(J)\cong\Z$, and in particular $J$ is absolutely simple.
\end{thm}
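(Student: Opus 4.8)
The plan is to follow Zarhin's strategy, which extracts everything from the Galois action on the $2$-torsion $J[2]$. First I would recall the description of $J[2]$ as a $\Gal(\overline{k}/k)$-module: writing $\mathcal{R}\subset\overline{k}$ for the set of $n$ roots of $f$, the group $J[2]$ is isomorphic to the ``heart'' of the permutation module $\F_2^{\mathcal{R}}$; concretely, to the sum-zero subspace $(\F_2^{\mathcal{R}})^{0}$ when $n$ is odd, and to $(\F_2^{\mathcal{R}})^{0}/\F_2\cdot(1,\dots,1)$ when $n$ is even. In both cases this is a $2g$-dimensional $\F_2$-vector space on which $\Gal(\overline{k}/k)$ acts through its quotient $\Gal(f)\in\{S_n,A_n\}$.

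Second, and this is the crux, I would prove that under the stated hypotheses ($n\ge 5$, and $\mathrm{char}(k)\neq 3$ or $n\ge 7$) this representation of $S_n$ or $A_n$ on the heart module is \emph{very simple} in Zarhin's sense: it is absolutely irreducible, and every $\Gal$-equivariant $\F_2$-subalgebra of $\End_{\F_2}(J[2])$ is either all of $\End_{\F_2}(J[2])$ or a field. Absolute irreducibility of the heart module for $S_n$ and $A_n$ over $\F_2$ is classical modular representation theory of symmetric groups, and it already yields $\End_{\Gal}(J[2])=\F_2$ exactly; the upgrade to ``very simple'' is obtained from the high transitivity and primitivity of $S_n$ and $A_n$ on $\mathcal{R}$. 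The restrictions on $n$ and on the characteristic are precisely what is needed to exclude the small exceptional configurations (e.g. the coincidences around $A_5$, $A_6$, and characteristic $3$) where these transitivity and primitivity inputs degenerate.

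Third, I would deduce the theorem from very simplicity. Any $\alpha\in\End(J)$ commutes with the Galois action, hence induces a $\Gal$-equivariant endomorphism of $J[2]$, giving a homomorphism $\End^{0}(J):=\End(J)\otimes\Q$ into a ring acting on $J[2]$ compatibly with $\Gal$. A nontrivial isogeny decomposition of $J$ would split $J[2]$, contradicting absolute irreducibility, so $J$ is simple and $\End^{0}(J)$ is a division algebra $D$. If $D\neq\Q$, then (being CM by a field of degree $2g$, or a totally definite or totally indefinite quaternion algebra) $D$ equips $J[2]$ with the structure of a module over a nontrivial $\F_2$-algebra of $\Gal$-equivariant endomorphisms; combined with $\dim_{\F_2}J[2]=2g$ and with $\End_{\Gal}(J[2])=\F_2$, this contradicts the very simple property via Zarhin's dichotomy. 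Hence $\End^{0}(J)=\Q$, and since $\End(J)$ is an order in $\Q$ containing $\Z$ we get $\End(J)=\Z$. Finally, replacing $k$ by a finite extension does not destroy the property that the Galois image on $J[2]$ is $S_n$ or $A_n$ (it can only shrink $S_n$ to $A_n$, which is still very simple), so the same argument over $\overline{k}$ shows $\End(J_{\overline{k}})=\Z$, i.e. $J$ is absolutely simple.

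I expect the main obstacle to be the second step: proving that the heart module is \emph{very simple}, not merely absolutely irreducible, is where the delicate group-theoretic hypotheses genuinely enter and where one must carefully dispatch the low-$n$ and characteristic-$3$ exceptions; the first and third steps are, by comparison, formal once this is in hand.
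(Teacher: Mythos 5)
The paper does not prove this statement: it is quoted verbatim from Zarhin \cite[Theorem 1.1]{Zarhin2009} and used as a black box, so there is no internal proof to compare against. Your sketch is a fair reconstruction of Zarhin's published strategy (the $2$-torsion $J[2]$ as the heart of the permutation module $\F_2^{\mathcal{R}}$, the notion of a \emph{very simple} Galois module, and the deduction $\End(J)\cong\Z$), and you correctly identify that the real work is in establishing very simplicity of the heart module for $S_n$ and $A_n$.

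That said, your third step contains a genuine confusion that matters. Elements of $\End(J_{\overline{k}})$ do \emph{not} commute with the Galois action; rather, $\Gal(\overline{k}/k)$ acts on $\End(J_{\overline{k}})$ by conjugation, so the image of $\End(J_{\overline{k}})\otimes\F_2$ in $\End_{\F_2}(J[2])$ is a Galois-\emph{stable} subalgebra, not a subalgebra of the centralizer. Absolute irreducibility only controls the centralizer ($\End_{\Gal}(J[2])=\F_2$); it is precisely because one must instead control Galois-stable subalgebras that Zarhin needs the stronger ``very simple'' dichotomy (every Galois-stable subalgebra is $\F_2\cdot\mathrm{id}$ or all of $\End_{\F_2}(J[2])$). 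You state the dichotomy but then argue as if endomorphisms were equivariant, which would collapse the argument back to the insufficient absolute-irreducibility input. Moreover, the dichotomy leaves open the branch where the image is \emph{all} of $\End_{\F_2}(J[2])$; this corresponds to $J$ being isogenous to a power of a supersingular elliptic curve and must be excluded separately --- this is exactly where the hypotheses $\Char(k)\neq 3$ or $n\ge 7$ do their work, and your sketch does not address it. Finally, ``replacing $k$ by a finite extension can only shrink $S_n$ to $A_n$'' is too quick: an arbitrary finite extension could destroy the Galois image entirely. What saves the argument is that the field over which all endomorphisms become defined gives a \emph{normal} constraint, so the relevant image is a normal subgroup of $S_n$ or $A_n$, and for $n\ge 5$ the only proper normal subgroups are $A_n$ and $\{1\}$, the latter being ruled out by a separate argument. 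These are the points you would need to repair to turn the outline into a proof.
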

We fix a genus $g \ge 2$. Let $n=2g+1$ or $2g+2$.
The set of all polynomials of degree $\le n$ can be naturally identified with
$\Aff^{n+1}(\Q)$: here $\aaa=(a_0,a_1,\dotsc,a_{n}) \in \Aff^{n+1}(\Q)$
corresponds to the polynomial
\[
	f_\aaa (X) \; = \; a_n X^n+a_{n-1} X^{n-1}+\cdots+a_0.
\]
Hilbert's irreducibility theorem asserts the existence of a thin set $S \subset \Aff^{n+1}(\Q)$
such that for $\aaa \in \Aff^{n+1}(\Q) \setminus S$, the polynomial $f_\aaa$ is irreducible
of degree $n$ and has Galois group $S_n$. 
See \cite[Chapter 9]{SerreMW} for a statement of Hilbert's irreducibility theorem
as well as the definition of thin sets.

Therefore, for $\aaa \in \Aff^{n+1}(\Q) \setminus S$,
the genus $g$ hyperelliptic curve $Y^2=f_\aaa(X)$ has
 a simple Jacobian by Zarhin's theorem.

We point out that there is no shortage of
hyperelliptic curves satisfying the finite
Mordell--Weil group condition of Corollary~\ref{cor:hyp}.  This immediately
follows from the preceding remarks together with the following Theorem of Yu
\cite[Theorem 3]{Yu16}, applied with $r=0$.

\begin{thm}[Yu]\label{thm:Yu}
Let $K$ be a number field with at least one real embedding.
Let $f\in K[X]$ be a separable degree $n$ polynomial such that
$n\equiv 3 \pmod{4}$ and $\Gal(f)\cong S_{n}$ or $A_{n}$.
Let $C:\; Y^2=f(X)$ be a hyperelliptic curve over $K$ with Jacobian $J$.
Then for every $r\geq 0$, there are infinitely many quadratic twists $J_{b}$ of $J$ such that
$\dim_{\F_{2}}(\Sel_{2}(J_{b}/K))=r$.
\end{thm}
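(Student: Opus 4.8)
The statement is a theorem of Yu \cite{Yu16}; since the paper quotes it rather than reproving it, I will sketch the strategy one would follow. The plan is to adapt to hyperelliptic Jacobians the method introduced by Mazur and Rubin (and further developed by Klagsbrun, Mazur and Rubin) for controlling $2$-Selmer ranks in quadratic twist families of elliptic curves.

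First I would set up the twist family over the fixed Galois module $T:=J[2]$. For $b\in K^\times/(K^\times)^2$, the quadratic twist $J_b$ satisfies $J_b[2]\cong T$ as $\Gal(\overline{K}/K)$-modules, since a quadratic character acts trivially on an $\F_2$-vector space; hence every $\Sel_2(J_b/K)$ is a subgroup of the single group $H^1(K,T)$, cut out by the family of local conditions $\mathcal{L}_v(b):=\operatorname{im}\bigl(J_b(K_v)/2J_b(K_v)\to H^1(K_v,T)\bigr)$. The basic observation is that $\mathcal{L}_v(b)=\mathcal{L}_v(1)$ for every place $v$ outside a finite set $\Sigma$ depending only on $J$ (the archimedean places, the places above $2$, and the places of bad reduction) together with the finitely many places ramified in $b$; and that at a suitably chosen prime $q\nmid 2\infty$ of good reduction at which $b$ ramifies, $\mathcal{L}_q(b)$ and $\mathcal{L}_q(1)$ are two Lagrangians of the local Tate symplectic pairing on $H^1(K_q,T)$ meeting in codimension one. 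Thus replacing $b$ by $bq$ performs a controlled surgery on exactly one local condition, and the aim is to choose $b$ so that the resulting global Selmer dimension equals $r$.

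Next I would exploit the hypothesis that $\Gal(f)\cong S_n$ or $A_n$ with $n\equiv 3\pmod 4$. This identifies $T$, together with its Weil pairing, with an explicit subquotient of the $\F_2$-permutation module on the roots of $f$, and makes the image of $\Gal(\overline{K}/K)$ in $\Aut(T)$ as large as possible. These are exactly the inputs the surgery argument needs: $T$ has no exceptional $\Gal(\overline{K}/K)$-submodule that would obstruct changing the Selmer rank, and, by the Chebotarev density theorem, there is a positive density of primes $q$ unramified outside $\Sigma$ with any prescribed Frobenius in the finite extension of $K$ cut out by $T$ and by finitely many relevant cocycle classes. With these in hand I would establish the core step: given any twist $J_b$, there is a prime $q\notin\Sigma$ with $\dim_{\F_2}\Sel_2(J_{bq}/K)=\dim_{\F_2}\Sel_2(J_b/K)\pm 1$, either sign being achievable, the sign being detected through global (Poitou--Tate) duality by whether a certain global class is locally trivial at $q$. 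Iterating from the finite base value $\dim_{\F_2}\Sel_2(J/K)$ one reaches the target $r$ in finitely many steps, and multiplying by further primes that leave the Selmer dimension unchanged then yields infinitely many admissible $b$.

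The hard part, and the step where the remaining hypotheses are consumed, is the local analysis at the places of $\Sigma$, above all at $v\mid 2$ and $v\mid\infty$: one must pin down the local conditions there precisely enough to see that the sign in the core step is genuinely available in both directions starting from an arbitrary twist, so that $\Sel_2$ of every dimension $r\geq 0$ is reachable. The assumption that $K$ has a real embedding is what makes the archimedean contribution behave, while the congruence $n\equiv 3\pmod 4$ is what makes the contribution at the primes above $2$, and the precise shape of the Weil pairing on the permutation module, cooperate. Granting these local computations, the global surgery argument above completes the proof.
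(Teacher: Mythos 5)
This statement is quoted in the paper as an external result of Yu \cite[Theorem 3]{Yu16}; the paper supplies no proof of its own, so there is nothing internal to compare your argument against. Your sketch does correctly identify the strategy of Yu's actual proof, namely adapting the Mazur--Rubin and Klagsbrun--Mazur--Rubin twisting machinery from elliptic curves to hyperelliptic Jacobians: the $2$-torsion $J_b[2]\cong J[2]$ is twist-invariant, the Selmer groups all live in one cohomology group with varying local conditions, and one performs rank surgery one prime at a time. So as a roadmap it is sound and faithful to the source.

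As a proof, however, it is incomplete precisely where you say it is, and those deferred steps are the entire content of the theorem. Concretely: (1) the claim that at a well-chosen prime $q$ the two local conditions $\mathcal{L}_q(b)$ and $\mathcal{L}_q(1)$ are Lagrangians meeting in codimension one is not automatic for a $2g$-dimensional symplectic module; it requires producing, via Chebotarev in the field cut out by $J[2]$ and finitely many auxiliary cocycles, primes whose Frobenius has a prescribed fixed space on $J[2]$, and this is exactly where the $S_n$/$A_n$ hypothesis must be spent --- you assert the conclusion without exhibiting the required conjugacy classes. (2) The statement asks for \emph{every} $r\ge 0$, including $r=0$ and both parities; in the Klagsbrun--Mazur--Rubin framework reaching both parities is obstructed unless a disparity condition holds, and verifying it is where the real embedding and the congruence $n\equiv 3\pmod 4$ enter through the local conditions at $v\mid\infty$ and $v\mid 2$. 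You flag this as ``the hard part'' and then grant it. (3) Producing \emph{infinitely many} twists with a given Selmer dimension requires showing that a positive density of further primes leaves the local conditions, and hence the Selmer group, unchanged; this again is asserted rather than proved. None of these are fatal misconceptions --- they are the theorem's actual content, outsourced. If the goal were to reprove Yu's theorem rather than cite it, each of these three points would need a full argument.
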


\section{Proof of Theorem ~\ref{thm:infprimitive}}\label{sec:infprimitive}
The proof of Theorem~\ref{thm:infprimitive} relies on the following proposition, which is in fact
a consequence of Hilbert's Irreducibility Theorem.
\begin{prop}\label{prop:morph}
Let $d\geq 2$ be an integer.
Let $f : C \rightarrow \PP^1$ be a degree $d$ morphism of curves defined over $\Q$, 
and let $\alpha \in \PP^1(\Q)$. Suppose that $\alpha$ is not a branch value for $f$,
and that the fibre $f^{-1}(\alpha)$ consists of a single 
Galois orbit of points. Let $P \in f^{-1}(\alpha)$ and suppose the extension $\Q(P)$ is primitive.
Then there is a thin set $S \subset \PP^1(\Q)$ such that for $\beta \in \PP^1(\Q) \setminus S$,
the fibre $f^{-1}(\beta)$ consists of a single Galois orbit of points and,
for any $Q \in \pi^{-1}(\beta)$, the extension
$\Q(Q)$ is primitive of degree $d$.
\end{prop}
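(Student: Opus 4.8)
The plan is to interpret the primitivity of the extension $\Q(P)/\Q$ group-theoretically and then invoke Hilbert's Irreducibility Theorem in the form that lets us control the Galois action on a generic fibre. Let me think through the steps.

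First, let $d=\deg(f)$ and let $\widetilde{C} \to \PP^1_t$ be the Galois closure of the function-field extension $\Q(C)/\Q(t)$, where $t$ is the coordinate on $\PP^1$. Write $G=\Gal(\Q(C)^{\mathrm{gal}}/\Q(t))$, realised as a transitive subgroup of $S_d$ via its action on the $d$ embeddings of $\Q(C)$ (equivalently, on a generic geometric fibre of $f$). The stabiliser of one point is a subgroup $H \le G$ of index $d$. The hypothesis that the specific fibre $f^{-1}(\alpha)$ consists of a single Galois orbit and that $\Q(P)$ is primitive should be used to pin down the \emph{arithmetic} monodromy group: by Hilbert irreducibility applied in reverse, a single fibre where the splitting field has the ``full'' group forces the generic arithmetic monodromy group to surject onto it. More precisely, specialisation at the unramified point $\alpha$ gives a surjection (well, an inclusion the other way) — the decomposition group at $\alpha$ is a subgroup of $G$, transitive on the $d$ points because the fibre is one Galois orbit, and $\Q(P)$ corresponds to the subgroup $H \cap (\text{decomposition group})$. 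Primitivity of $\Q(P)$ means there is no group strictly between $H$ and $G$ (restricted to the decomposition group) — which forces $H$ to be a maximal subgroup of $G$, i.e.\ the action of $G$ on the $d$ points is a \emph{primitive} permutation action.

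Second, with $G$ acting primitively on $d$ points established, I would apply Hilbert's Irreducibility Theorem (e.g.\ \cite[Chapter 9]{SerreMW}) to the cover $\widetilde{C} \to \PP^1$: there is a thin set $S_0 \subset \PP^1(\Q)$, which we may enlarge to also contain the branch locus of $f$ and the finitely many bad $\beta$, such that for every $\beta \in \PP^1(\Q) \setminus S_0$ the specialisation is ``as generic as possible'' — the decomposition group at (any place above) $\beta$ is all of $G$. Concretely this means: the fibre $f^{-1}(\beta)$ is a single Galois orbit (transitivity of $G$) and for $Q \in f^{-1}(\beta)$ the subgroup of $G$ fixing $Q$ is exactly a conjugate of $H$. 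Since $H$ is maximal in $G$, there is no field strictly between $\Q$ and $\Q(Q)$, so $\Q(Q)$ is primitive; and $[\Q(Q):\Q]=[G:H]=d$.

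The main obstacle I expect is the first step: making rigorous the passage from ``one good fibre at $\alpha$'' to a statement about the generic (arithmetic) monodromy group $G$. The cleanest route is to \emph{define} $G$ to be the Galois group of the Galois closure over $\Q(t)$ from the outset, note that the decomposition group $D_\alpha \le G$ at the unramified point $\alpha$ acts on the fibre exactly as $\Gal(\Q(P)^{\mathrm{gal}}/\Q)$ acts on the Galois orbit of $P$, and observe that since $D_\alpha$ is already transitive of degree $d$ with a maximal point-stabiliser (by primitivity of $\Q(P)$), the group $G \supseteq D_\alpha$ is a fortiori transitive with a maximal point-stabiliser — one must check that $H := (\text{point stabiliser in } G)$ is maximal, which follows because $H \cap D_\alpha$ is the maximal-index-$d$ stabiliser in $D_\alpha$ and any group strictly between $H$ and $G$ would, upon intersecting with $D_\alpha$, give a group strictly between the point-stabiliser and $D_\alpha$, contradicting primitivity of $\Q(P)$; the one subtlety is that the index is preserved, i.e.\ $[G:H]=[D_\alpha : H\cap D_\alpha]=d$, which holds because both count the points of the fibre. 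Once this group-theoretic bookkeeping is in place, the rest is a direct citation of Hilbert irreducibility together with the elementary fact that primitivity of a field extension is equivalent to maximality of the corresponding point-stabiliser in the arithmetic monodromy group, which is presumably recorded in Section~\ref{sec:infprimitive}.
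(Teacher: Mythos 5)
Your proposal is correct and follows essentially the same route as the paper: pass to the Galois closure of $\Q(C)/\Q(t)$, identify the decomposition group at the unramified point $\alpha$ with $\Gal(\widetilde{\Q(P)}/\Q)$ acting on the fibre, use the equivalence ``$\Q(P)$ primitive $\Leftrightarrow$ the decomposition group acts primitively'' together with the fact that a transitive action is primitive once some subgroup acts primitively, and then invoke Hilbert's Irreducibility Theorem to get $G_\beta=G$ off a thin set. The paper merely fills in the details you flag as the main obstacle (explicit plane model $F(U,V)=0$, the residue-field identification of the decomposition group, and the extra points adjoined to $S$), and phrases the ``maximal stabiliser'' step via block systems rather than intersecting intermediate subgroups with $D_\alpha$, but the content is the same.
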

Before proving Proposition~\ref{prop:morph}
we show that it implies Theorem~\ref{thm:infprimitive}.
\begin{proof}[Proof of Theorem~\ref{thm:infprimitive}]
Let $C/\Q$ be a curve of genus $g$. 
Suppose $P \in C(\overline{\Q})$ is primitive of degree $d \ge g+1$,
and let $D$ be the corresponding irreducible divisor.

By Riemann--Roch,
\[
	\ell(D) \; \ge \; d-g+1 \; \ge \; 2.
\]
It follows from Lemma~\ref{lem:func}
that there is a degree $d$ morphism
$f : C \rightarrow \PP^1$ defined over $\Q$
such that $f^*(\infty)=D$.
We apply Proposition~\ref{prop:morph} with $\alpha=\infty \in \PP^1(\Q)$.
The theorem follows.
\end{proof}

The proof of Proposition~\ref{prop:morph} makes use
of the relationship between primitive extensions and
primitive Galois groups. Whilst this relationship
is known, we are unable to find a convenient
reference, and we therefore give the details.
Let $X$ be a non-empty set, and let $G$ be
a group acting transitively on $X$. 
The \textbf{trivial} partitions of $X$ 
are $\{X\}$ and $\{\{x\} : x \in X\}$.
A partition $\cP$ of $X$ is said to be \textbf{$G$-stable}
if $\sigma(Z) \in \cP$ for all
$\sigma \in G$ and $Z \in \cP$. 
Observe, as the action of $G$ on $X$ is transitive,  
that $G$ also acts transitively on any $G$-stable partition $\cP$,
and that any two elements of $\cP$ therefore have the same cardinality.

We say that $G$ \textbf{acts imprimitively}
on $X$ if $X$ admits a $G$-stable non-trivial partition.
If $X$ does not have a $G$-stable non-trivial partition
then we say that $G$ \textbf{acts primitively on $X$}.
The following lemma is an immediate consequence of this
definition.
\begin{lem}\label{lem:bigger}
Let $G$ be a group acting transitively on a set $X$.
Let $G^\prime$ be a subgroup of $G$ and suppose that $G^\prime$
acts primitively on $X$. Then $G$ acts primitively on $X$.
\end{lem}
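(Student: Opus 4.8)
The plan is to prove the contrapositive: I will show that an imprimitive action of $G$ on $X$ forces an imprimitive action of $G^\prime$. First I would note that the statement is even meaningful: since $G^\prime$ acts primitively on $X$ it in particular acts transitively on $X$, and as $G^\prime \subseteq G$ the group $G$ acts transitively on $X$ as well, so primitivity of the $G$-action is defined.

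Now suppose for contradiction that $G$ acts imprimitively on $X$, so that $X$ admits a $G$-stable non-trivial partition $\cP$. The key observation is that $G$-stability is a stronger requirement than $G^\prime$-stability: since $G^\prime \subseteq G$, for every $\sigma \in G^\prime$ and every $Z \in \cP$ we still have $\sigma(Z) \in \cP$, so $\cP$ is a $G^\prime$-stable partition. Moreover $\cP$ is non-trivial as a partition of $X$, a property that depends only on $X$ and not on the acting group. Hence $\cP$ witnesses that $G^\prime$ acts imprimitively on $X$, contradicting the hypothesis. Therefore no $G$-stable non-trivial partition of $X$ exists, and $G$ acts primitively on $X$.

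There is essentially no real obstacle here; the lemma is immediate from unwinding the definitions. The only point requiring any care is to keep the notion of \lq\lq non-trivial partition of $X$\rq\rq\ separate from the acting group, so that the same partition $\cP$ can be re-used verbatim as a candidate obstruction for the subgroup $G^\prime$.
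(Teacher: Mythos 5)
Your proof is correct and takes essentially the same (and only) route as the paper, which simply notes the lemma is an immediate consequence of the definitions: any $G$-stable non-trivial partition of $X$ is automatically $G^\prime$-stable, which is precisely the contrapositive you spell out.
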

The following result is well-known, and in particular 
implies that $S_d$ and $A_d$ act primitively on $\{1,\dotsc,d\}$,
for $d \ge 1$ and $d \ge 3$ respectively.
\begin{lem}\label{lem:Sdprim}
Let $G$ be a group acting $2$-transitively on a set $X$.
Then the action is primitive.
\end{lem}
\begin{proof}
Let $\cP$ be a $G$-stable partition of $X$ and suppose $Y \in \cP$ has at least two
elements. We would like to show that $Y=X$.
Let $a$, $b \in Y$ be distinct, and let $c \in X$ be distinct from $a$, $b$. Then there is some $\sigma \in G$
such that $\sigma(a)=a$ and $\sigma(b)=c$. Thus $a \in Y \cap \sigma(Y)$ which forces $\sigma(Y)=Y$, and therefore $c \in Y$. Hence $Y=X$. 
\end{proof}

\begin{lem}\label{lem:transvection}
Let $G$ be a group acting transitively on a set $X$.
The action is imprimitive if and only if there is 
a proper subset $Y$ of $X$,
with at least two elements, such that
\begin{equation}\label{eqn:transvection}
	\forall \sigma \in G, \qquad \text{if $\sigma(Y) \cap Y \ne 
	\emptyset$ then $\sigma(Y)=Y$}.
\end{equation}
\end{lem}
\begin{proof}
Given a $G$-stable non-trivial partition $\cP$ we can take
$Y$ to be any element of $\cP$. As $\cP$ is a partition, $Y$ clearly satisfies
\eqref{eqn:transvection}, and as $\cP$ is non-trivial, $Y$ is a proper subset of $X$
with at least two elements.

Conversely, suppose $Y$ is a proper subset of $X$ containing
at least two elements and satisfying \eqref{eqn:transvection}.
We easily check that $\cP=\{\tau(Y) : \tau \in G\}$ is a $G$-stable
non-trivial partition.
\end{proof}

\begin{lem}\label{lem:primstab}
Let $G$ be a finite group acting transitively on a non-empty finite set $X$.
Let $x \in X$, and write $\Stab(x)$ for the stabilizer
of $x$ in $G$. 
The action of $G$ on $X$ is imprimitive
	if and only if $\Stab(x)$ is a non-maximal
	proper subgroup of $G$.
\end{lem}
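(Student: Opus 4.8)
The plan is to exploit the standard correspondence between $G$-stable partitions of a transitive $G$-set $X$ and subgroups of $G$ lying between $\Stab(x)$ and $G$. Fix $x \in X$ and write $H = \Stab(x)$. Since the action is transitive, we may identify $X$ with the coset space $G/H$, with $x$ corresponding to the trivial coset $H$ and the $G$-action being left translation.

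First I would establish the forward direction by contraposition: suppose $H$ is \emph{not} a non-maximal proper subgroup, i.e.\ $H$ is either equal to $G$ or is a maximal proper subgroup. If $H = G$ then $X$ is a singleton and the only partition is trivial, so $G$ acts primitively (vacuously, since there is no proper subset with at least two elements). If $H$ is maximal, suppose for contradiction that $\cP$ is a $G$-stable non-trivial partition, and let $Y \in \cP$ be the block containing $x$. Set $K = \{\sigma \in G : \sigma(Y) = Y\}$, the setwise stabilizer of the block $Y$. One checks $K$ is a subgroup of $G$ containing $H$ (if $\sigma \in H$ then $\sigma(Y)$ is a block containing $x$, hence equals $Y$ by the partition property), so by maximality $K = H$ or $K = G$. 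If $K = G$ then $Y$ is $G$-stable, and by transitivity $Y = X$, contradicting that $Y$ is proper. If $K = H$, then the orbit-counting / block-counting identity gives $\lvert Y \rvert = [K : H] = 1$, so $Y = \{x\}$, and then $\cP = \{\{y\} : y \in X\}$ is trivial, a contradiction. Either way $G$ acts primitively.

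For the converse, suppose $H$ is a non-maximal proper subgroup, so there is a subgroup $K$ with $H \subsetneq K \subsetneq G$. Let $Y$ be the image of $K$ in $X = G/H$, i.e.\ $Y = \{\, kH : k \in K\,\}$, equivalently the $K$-orbit of $x$. Then $\lvert Y \rvert = [K : H] \ge 2$ since $H \subsetneq K$, and $Y \ne X$ since $[G : K] \ge 2$, so $Y$ is a proper subset with at least two elements. I would then verify that $Y$ satisfies condition~\eqref{eqn:transvection}: for $\sigma \in G$, the translate $\sigma(Y)$ is the coset $\sigma K / H$ (the $\sigma K \sigma^{-1}$-orbit of $\sigma x$), and two left cosets of $K$ are either equal or disjoint, so if $\sigma(Y) \cap Y \ne \emptyset$ then $\sigma(Y) = Y$. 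By the preceding lemma, $\cP = \{\tau(Y) : \tau \in G\}$ is a $G$-stable non-trivial partition, so $G$ acts imprimitively.

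The main obstacle is not any single deep step but rather bookkeeping: keeping straight the identification $X \cong G/H$ and checking carefully that the setwise block-stabilizer $K$ in the forward direction genuinely contains $H$ and is a subgroup, and dually that the set $Y$ built from an intermediate subgroup in the converse is exactly a union of $H$-cosets so that the coset dichotomy applies. Once the coset picture is set up cleanly, both directions reduce to the elementary fact that distinct left cosets of a subgroup are disjoint, together with the subgroup correspondence $H \subseteq K \subseteq G$.
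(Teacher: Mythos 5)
Your proof is correct and follows essentially the same route as the paper: both directions reduce to the correspondence between blocks containing $x$ and subgroups lying between $\Stab(x)$ and $G$, realized via the setwise stabilizer of a block in one direction and the orbit of $x$ under an intermediate subgroup in the other. The only differences are cosmetic --- you argue the forward implication by contraposition and work in coset language, and the one fact you invoke without proof (that the block containing $x$ equals the orbit of $x$ under its setwise stabilizer, giving $\lvert Y\rvert=[K:H]$) is the same one-line verification the paper carries out directly.
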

\begin{proof}
Let $x \in X$ and assume the existence of a subgroup $\Stab(x) \subsetneq H \subsetneq G$.
Let $Y=\{ \tau(x) : \tau \in H\}$. Then, $\# Y=[H:\Stab(x)]$ and so
$2 \le \# Y < [G: \Stab(x)]=\# X$. 
Moreover, suppose $\sigma \in G$ and $\sigma(Y) \cap Y \ne 
	\emptyset$. 
Let $z \in \sigma(Y) \cap Y$.
Then there are $\tau_1$, $\tau_2 \in H$ such that $\sigma\tau_2(x)=z=\tau_1(x)$. Thus $\tau_1^{-1} \sigma \tau_2 \in \Stab(x) \subseteq H$.
	Hence $\sigma \in H$, and so $\sigma(Y)=Y$. Therefore \eqref{eqn:transvection} is satisfied and so the action is imprimitive.

Conversely, suppose the existence of a proper subset $Y$ of $X$ with at least two elements satisfying \eqref{eqn:transvection}. 
As the action is transitive, we may in fact suppose that $x \in Y$.
	Let
	$H=\{\sigma \in G \; : \: \sigma(Y)=Y\}$. As $G$ is transitive, $H$ is a proper subgroup of $G$. Moreover, $\Stab(x)$ is contained in $H$.
Let $x^\prime \in Y$, $x^\prime \ne x$. Then there is some
	$\sigma \in G$ such that $\sigma(x)=x^\prime$. Thus $\sigma(Y)=Y$,
	and so $\sigma \in H$ but $\sigma \notin \Stab(x)$. It follows
	that $\Stab(x)$ is a proper in $H$, and so is non-maximal
	as a subgroup of $G$.
\end{proof}

\begin{lem}\label{lem:primGal}
Let $K=\Q(\theta)$ be a number field and let $\tilde{K}$ be its Galois
closure over $\Q$. 
Let $G=\Gal(\tilde{K}/\Q)$.
	Let $d=[K:\Q]$
	and let $\theta_1,\dotsc,\theta_d \in \tilde{K}$ be the Galois
conjugates of $\theta$. Then $G$ acts primitively
on $\{\theta_1,\dotsc,\theta_d\}$ if and only if 
the extension $K/\Q$ is primitive.
\end{lem}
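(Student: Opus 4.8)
The plan is to deduce Lemma~\ref{lem:primGal} from the Galois correspondence together with Lemma~\ref{lem:primstab}. Write $X=\{\theta_1,\dotsc,\theta_d\}$, take $\theta=\theta_1$, and let $G$ act on $X$ in the natural way; this action is transitive because the conjugates of $\theta$ form a single Galois orbit. Put $H_0=\Stab(\theta_1)\le G$. The first step is to identify the fixed field $\tilde{K}^{H_0}$: since $H_0$ fixes $\theta$ we have $K=\Q(\theta)\subseteq \tilde{K}^{H_0}$, and by orbit--stabilizer $[G:H_0]=d$, so $[\tilde{K}^{H_0}:\Q]=d=[K:\Q]$, forcing $\tilde{K}^{H_0}=K$.

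Next I would invoke the Galois correspondence for $\tilde{K}/\Q$. The inclusion-reversing bijection $H\mapsto\tilde{K}^H$ restricts to a bijection between subgroups $H$ with $H_0\subseteq H\subseteq G$ and intermediate fields $L$ with $\Q\subseteq L\subseteq K$, and it carries strict containments to strict containments. Hence there exists an intermediate field $\Q\subsetneq L\subsetneq K$ if and only if there exists a subgroup $H_0\subsetneq H\subsetneq G$, which happens if and only if $H_0$ is a non-maximal proper subgroup of $G$. In other words, $K/\Q$ is imprimitive precisely when $\Stab(\theta_1)$ is a non-maximal proper subgroup of $G$.

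Finally, apply Lemma~\ref{lem:primstab} to the finite group $G$ acting transitively on the finite non-empty set $X$: $\Stab(\theta_1)$ is a non-maximal proper subgroup of $G$ if and only if $G$ acts imprimitively on $X$. Chaining the two equivalences gives: $K/\Q$ is imprimitive if and only if $G$ acts imprimitively on $\{\theta_1,\dotsc,\theta_d\}$; negating both sides yields the statement of the lemma. The argument is entirely a translation between the subgroup lattice of $G$ and the subfield lattice of $K/\Q$, so there is no real obstacle; the only point meriting a word of care is the degenerate case $d=1$, where $K=\Q$ is vacuously primitive, $H_0=G$ is not a proper subgroup, and the action on a one-point set is primitive by definition, so all conditions trivially agree.
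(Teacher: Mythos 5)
Your proof is correct and follows essentially the same route as the paper's: identify $\Stab(\theta)$ with $\Gal(\tilde{K}/K)$, translate intermediate fields into intermediate subgroups via the Galois correspondence, and conclude with Lemma~\ref{lem:primstab}. Your write-up merely spells out the identification $\tilde{K}^{H_0}=K$ (via orbit--stabilizer and degree counting) that the paper states without proof.
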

\begin{proof}
Let $X=\{\theta_1,\dotsc,\theta_d\}$. Then $G$
acts transitively on $X$. We let $x=\theta \in X$ 
and note that the stabilizer $\Stab(\theta)$ is in fact
$\Gal(\tilde{K}/K)$. By the Galois correspondence, $K$ is imprimitive
	if and only if the subgroup $\Gal(\tilde{K}/K)$ is proper
	and non-maximal in $G$, which by Lemma~\ref{lem:primstab}
	if equivalent to the action of $G$ being imprimitive.
\end{proof}

The following result was assumed in the introduction.
\begin{lem}\label{lem:pointSdprim}
Let $C/\Q$ be a curve, let $d \ge 3$ and let $P$
be a degree $d$ point on $C$ with Galois group $S_d$
or $A_d$. Then $P$ is primitive. 
\end{lem}
\begin{proof}
The lemma follows from Lemma~\ref{lem:primGal},
and for this we need the fact that $S_d$
and $A_d$ act primitively on $\{1,\dotsc,d\}$.
This is trivially true if $d=3$, and for $d \ge 4$
follows from Lemma~\ref{lem:Sdprim}.
\end{proof}

\begin{proof}[Proof of Proposition~\ref{prop:morph}]
By composing $f$ with a suitable automorphism of $\PP^1$
we may suppose that $\alpha=0 \in \PP^1(\Q)$.
Write $\KK=\Q(C)$ for the function field of $C$.
We may regard $f$ as a non-constant element of $\KK$,
and with this interpretation $\Q(f)=\Q(\PP^1)$ is a subfield of $\KK$
of index $d$. 
	
By hypothesis, $\alpha=0$ is not a branch value for $f$,
and $f^{-1}(0)$ consists of a single Galois orbit containing $P$.
Write $D=f^*(0)$ which we think of as a degree $d$ place of $\KK$,
unramified in the extension $\KK/\Q(\PP^1)$.
Write
\[
	\OO_{D}=\{ h \in \KK \; : \; \ord_D(h) \ge 0\}, 
	\qquad
	\fm_{D}=\{ h \in \KK \; : \; \ord_D(h) \ge 1\},
\]
for the valuation ring of $D$ and its maximal ideal. Then the residue field 
$\OO_{D}/\fm_{D}$ can be identified with $K=\Q(P)$ where
the identification is given by $g +\fm_{D} \mapsto g(P)$.
	Now fix $\theta \in K$ such that $K=\Q(\theta)$.
Then there is some $g \in \OO_{D}$ such that
$g(P)=\theta$. As $g \in \KK$ and $\KK$ has degree $d$ over $\Q(f)$,
there is a polynomial $F(U,V) \in \Q[U,V]$,
\begin{equation}\label{eqn:FUV}
	F(U,V)=\sum_{i=1}^n a_i(V) U^i, \qquad a_i(V) \in \Q[V]
\end{equation}
of degree $n \mid d$, such that $\gcd(a_0(V),\dotsc,a_n(V))=1$,
and $F(g,f)=0$. 
Now, $F(\theta,0)=F(g(P),f(P))=0$, and so $\theta$ is a root of the polynomial $F(U,0) \in \Q[U]$;
this polynomial is non-zero as $\gcd(a_0(V),\dotsc,a_n(V))=1$.
As $\theta$ has degree $d$, it follows that $n=d$, and that $F(U,V)$ is irreducible over $\Q(V)$.
In particular $F(U,V)=0$ is a (possibly singular) plane model for $C$. As $C$
is absolutely irreducible, $F(U,V)$ is irreducible over $\overline{\Q}$. 
Write $\tilde{\KK}$ for the Galois closure of $\KK/\Q(\PP^1)$,
and let $g_1,\dotsc,g_d$ be the roots of $F(U,f)=0$ in $\tilde{\KK}$;
then $\tilde{\KK}=\Q(\PP^1)(g_1,\dotsc,g_d)$.

Let $\tilde{C}/\Q$ be the algebraic curve associated to $\tilde{\KK}$.
Let $\tilde{D}$ be an extension of the place $D$ of $\KK$ to $\tilde{\KK}$.
As $D$ is unramified in $\KK/\Q(\PP^1)$, the place $\tilde{D}$ is unramified
in the Galois closure $\tilde{\KK}/\Q(\PP^1)$ (see for example \cite[Corollary III.8.4]{Stichtenoth}).
We claim that $g_1,\dotsc,g_d \in \OO_{\tilde{D}}$. To see this, note that
	$G=\Gal(\overline{\KK}/\Q(\PP^1))$ acts transitively on the $g_i$.
	Thus we would like to show $\ord_{\tilde{D}}(\tau(g)) \ge 0$
	for all $\tau \in G$. However, 
	\[
		\ord_{\tilde{D}}(\tau(g)) \; = \; \ord_{\tau^{-1}(\tilde{D})}(g) \; = \; \ord_{D}(g) \; \ge \; 0,
	\]
where the second equality follows as $g \in \KK$ and $\tau^{-1}(\tilde{D})$ is an unramified place of $\tilde{\KK}$ above $D$.
This proves the claim.

The place $\tilde{D}$
corresponds to a Galois orbit of points on $\tilde{C}$
and we let $\tilde{P}$ be a representative point chosen above $P$.
	Write $G_{\tilde{D}}$ for the decomposition group corresponding to $\tilde{D}$
in $G=\Gal(\tilde{\KK}/\Q(\PP^1))$; by definition this is
\[
	G_{\tilde{D}}=\{\sigma \in G \; : \; \sigma(\fm_{\tilde{D}})=\fm_{\tilde{D}} \}.
\]
As $\tilde{D}$ is unramified in 
the Galois closure $\tilde{\KK}/\Q(\PP^1)$, the theory of decomposition groups
allows us to identify the 
decomposition group $G_{\tilde{D}}$ with $\Gal(\tilde{K}/\Q)$, where $\tilde{K}$
is the Galois closure of $K/\Q$. Here we shall in fact 
need the details of the precise identification. Let $\sigma \in G_{\tilde{D}}$.
We associate $\sigma$ to $\sigma^\prime \in \Gal(\tilde{K}/\Q)$ as follows.
We let $\gamma \in \tilde{K}\cong \OO_{\tilde{D}}/\fm_{\tilde{D}}$. Then
$\gamma=h(\tilde{P})$ for some $h \in \OO_{\tilde{D}}$,
and we define $\sigma^\prime(\gamma)=\sigma(h)(\tilde{P})$. It immediately
follows from the definition of $G_{\tilde{D}}$ that $\sigma^\prime$ is well-defined. The map $G_{\tilde{D}} \rightarrow \Gal(\tilde{K}/\Q)$
given by $\sigma \mapsto \sigma^\prime$ is in fact an isomorphism \cite[Theorem III.8.2]{Stichtenoth},
and from now on we shall 
identify $\Gal(\tilde{K}/\Q)$ with $G_{\tilde{D}}$ via this identification.
	
Let $\theta_1,\dotsc,\theta_d$ be the conjugates of $\theta$ in $\tilde{K}$, where we take $\theta_1=\theta$.
Now $\theta=g(P)=g(\tilde{P})$. Let $1 \le i \le d$. Then there is some $\sigma_i \in G_{\tilde{D}}$ such that
$\sigma_i(\theta)=\theta_i$ and therefore $\sigma_i(g)(\tilde{P})=\theta_i$. 
Thus there is a conjugate
of $g$ sending $\tilde{P}$ to $\theta_i$. 
As the $\theta_i$ are pairwise distinct, we may after reordering the $g_i$ suppose that
$g_i(\tilde{P})=\theta_i$. Now $\Gal(\tilde{K}/\Q)=G_{\tilde{D}}$ is a subgroup of $G=\Gal(\tilde{\KK}/\Q(\PP^1))$
and both act on $\{g_1,\dotsc,g_d\}$. As $K/\Q$ is primitive, Lemma~\ref{lem:primGal} tells us that 
$G_{\tilde{D}}$ acts primitively on the 
$\theta_i$ and therefore on the $g_i$. Thus by Lemma~\ref{lem:bigger}, the action
of $G$ on $\{g_1,\dotsc,g_d\}$ is primitive.

Finally we apply Hilbert's Irreducibility Theorem \cite[Chapter 9]{SerreMW} to $f: C \rightarrow \PP^1$. 
For $\beta \in \PP^1(\Q)$ we shall make a standard abuse of language
and speak of the decomposition group $G_\beta$ by which we mean the 
decomposition group $G_{\tilde{Q}}$ for any point $\tilde{Q} \in \tilde{C}$ above $\beta$.
As usual, $G_\beta$ is only defined up to conjugation in $G$.
Now, Hilbert's Irreducibility Theorem applied to $f: C \rightarrow \PP^1$
asserts the existence of a thin set $S \subset \PP^1(\Q)$,
which includes all branch values, such that
$G_\beta=G$ for $\beta \in \PP^1(\Q) \setminus S$. We enlarge $S$ by adjoining
finitely many values in $\PP^1(\Q)$: the roots of $a_d(V)$
(which is the leading coefficient of $F$ regarded as an element
of $\Q[V][U]$); the $V$-coordinate of any singular point
of the plane model $F(U,V)=0$; and the point $\infty \in \PP^1(\Q)$.
As we have added finitely many points to the set $S$ it remains thin.
Let $\beta \in \PP^1(\Q) \setminus S$. Let $\phi_1,\dotsc,\phi_d$
be the roots of $F(U,\beta)$. Let $Q=(\phi_1,\beta)$; this is a smooth point on
the plane model, and so may be regarded as a point on $C$. Let $\tilde{Q}$
be a point of $\tilde{C}$ above $Q$. Then as before, we can identify the 
action of $G_\beta$ on $\{\phi_1,\dotsc,\phi_d\}$ with the action of $G$
on $\{g_1,\dotsc,g_d\}$. As $G$ is acting transitively and primitively on 
$\{g_1,\dotsc,g_d\}$,
we have that $G_\beta=\Gal(\Q(f^{-1}(\beta))/\Q)$ is acting
transitively and primitively on $\{\phi_1,\dotsc,\phi_d\}$. Hence, the Galois action
on the fibre $f^{-1}(\beta)=\{(\phi_1,\beta),\dotsc,(\phi_d,\beta)\}$ is
primitive and, by Lemma~\ref{lem:primGal}, the field $\Q(Q)=\Q(\phi_1)$ is primitive of 
degree $d$.
The proposition follows.
\end{proof}

Let $C/\Q$ be a curve and let $d \ge g+1$ where $g$ is the genus of $C$.
Theorem~\ref{thm:infprimitive} asserts the existence 
of infinitely many primitive degree $d$ points on $C$
provided there is at least one. However, the existence
of a primitive degree $d$ point is not guaranteed,
as illustrated by the following lemma.
\begin{lem}
Let $g \ge 2$ be even. Let $C$ be a degree $2g+1$
genus $g$ curve defined over $\Q$
\[
        C \; : \; Y^2 \; = \; a_{2g+1} X^{2g+1}+a_{2g} X^{2g}+\cdots+a_0.
\]
Suppose $J(\Q)=0$ where $J$ is the Jacobian of $C$.
Then $C$ has no points of degree $g+1$ points.
\end{lem}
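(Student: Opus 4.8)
The plan is to use the hypothesis that $\deg F$ is odd, so that the model $C : Y^2 = F(X)$ (with $F(X)=a_{2g+1}X^{2g+1}+\cdots+a_0$) has a single point $\infty$ at infinity; this point is rational and is a Weierstrass point of $C$. Suppose, for contradiction, that $P$ is a degree $g+1$ point on $C$, and let $D$ be the corresponding irreducible divisor, a rational effective divisor of degree $g+1$. Since $\infty \in C(\Q)$, the divisor $(g+1)\infty$ is rational of degree $g+1$, and as $J(\Q)=0$ we have $[D-(g+1)\infty]=0$ in $J(\Q)$, whence $D \sim (g+1)\infty$, i.e.\ $D \in \lvert (g+1)\infty \rvert$. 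So it suffices to prove that the complete linear series $\lvert (g+1)\infty \rvert$ contains no irreducible divisor.

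The crux is an explicit description of $L((g+1)\infty)$, and this is where the parity hypothesis on $g$ enters. Any $f \in L((g+1)\infty)$ is regular on the affine curve $C \setminus \{\infty\}$, hence lies in its coordinate ring $\Q[X]\oplus \Q[X]\cdot Y$; write $f = a(X)+b(X)Y$. Since $\deg F = 2g+1$, the function $Y$ has a pole of order $2g+1$ at $\infty$ while $X$ has a pole of order $2$, so the pole order of $f$ at $\infty$ equals $2\deg a$ if $b=0$ and equals $\max(2\deg a,\, 2\deg b + 2g+1) \ge 2g+1$ if $b\neq 0$. As $g+1 < 2g+1$, membership in $L((g+1)\infty)$ forces $b=0$ and $2\deg a \le g+1$; because $g$ is even this gives $\deg a \le g/2$. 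Hence $L((g+1)\infty) = \langle 1, X, X^2, \dots, X^{g/2}\rangle$.

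Now take any effective $E \in \lvert (g+1)\infty \rvert$, say $E = (g+1)\infty + \divv(a)$ with $a=a(X)$ of degree $k \le g/2$. The divisor of poles of $a$ is $2k\,\infty$ and its divisor of zeros $\divv_0(a)$ is supported on the finite locus, so $E = (g+1-2k)\infty + \divv_0(a)$ with $g+1-2k \ge 1$; thus $\infty$ lies in the support of every member of $\lvert (g+1)\infty\rvert$. Since such $E$ has degree $g+1 \ge 3 > 1$, we may write $E = \infty + E'$ with $E'$ a nonzero effective divisor, so $E$ is reducible and in particular is not the Galois orbit of a degree $g+1$ point. This contradicts $D \in \lvert (g+1)\infty\rvert$ being irreducible, and the lemma follows. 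I expect the only delicate point to be the computation of $L((g+1)\infty)$: if $g$ were odd one could take $\deg a = (g+1)/2$, giving $E = \divv_0(a)$, which need not involve $\infty$, and the argument would collapse — so the evenness of $g$ is genuinely used.
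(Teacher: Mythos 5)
Your proof is correct and follows essentially the same route as the paper: reduce via $J(\Q)=0$ to the single complete linear series $\lvert (g+1)\infty\rvert$, identify $L((g+1)\infty)=\langle 1,X,\dotsc,X^{g/2}\rangle$ using the evenness of $g$, and conclude that $\infty$ sits in the support of every member of the series, so no member is irreducible. The one local difference is how the Riemann--Roch space is pinned down: the paper exhibits $1,X,\dotsc,X^{g/2}$ as elements and then caps $\ell((g+1)\infty)$ from above by Clifford's theorem (using that $(g/2)+3/2$ is not an integer), whereas you compute it directly from the coordinate ring $\Q[X]\oplus\Q[X]Y$ of the affine model and the pole orders of $X$ and $Y$ at $\infty$; your version is more elementary and self-contained, while the paper's is shorter given that Clifford's theorem is already quoted earlier in the text. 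Both are valid (your integral-closure step implicitly uses that the affine model is smooth, i.e.\ that the right-hand side is squarefree, which is part of the standing convention that $C$ is a smooth curve of genus $g$).
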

\begin{proof}
Write $\infty$ for the single point at infinity
on the given model. Write $D_0=(g+1) \infty$.
Note that $X$ has a double pole at $\infty$.
Thus $1,X,\dots,X^{g/2} \in L(D_0)$. 
We claim that $1,X,\dotsc,X^{g/2}$ is a basis for $L(D_0)$.
We first explain how our claim implies the lemma.
Let $D$ be an effective degree $g+1$ divisor.
Since $J(\Q)=0$, we have $D-D_0=\divv(f)$
for some $f \in L(D_0)$. Thus,
$f=\alpha_0+\alpha_1 X+\cdots+\alpha_{g/2} X^{g/2}$, for some $\alpha_0,\dotsc,\alpha_{g/2} \in \Q$.
In particular $f \in L(g \infty)$.
        Thus
        \[
                D-\infty \; =\; D_0+\divv(f)-\infty \; = \; g \infty +\divv(f)
        \]
is effective. Hence $D$ is reducible. It follows that $C$ has no degree $g+1$ points.

It remains to prove our claim.
Since $1,X,\dots,X^{g/2} \in L(D_0)$, our claim is equivalent to showing that
$\ell(D_0) \le g/2+1$. 
If $g=2$, the Riemann--Roch theorem
immediately implies that $\ell(D_0)=2=g/2+1$ as required.
We may therefore suppose $g >2$, and as $g$ is even, $g \ge 4$.
It follows from the Riemann--Roch
theorem \eqref{eqn:RR} that $\ell(K_C-D_0)=\ell(D_0)-2 \ge g/2-1>0$.
In particular, $D_0$ is a special divisor.
By Clifford's theorem (Theorem~\ref{thm:Clifford})
we have $\ell(D_0) \le g/2+3/2$. However, since $g$ is even and $\ell(D_0)$
is an integer, we have $\ell(D_0) \le g/2+1$,
completing the proof.
\end{proof}

In a positive direction, we can use Theorem~\ref{thm:infprimitive}
to construct curves with infinitely many primitive points.
\begin{lem}
Let $g \ge 2$. Let $d=g+1$. Then there is a hyperelliptic curve $C/\Q$
of genus $g$ with infinitely many primitive degree $d$ points.
\end{lem}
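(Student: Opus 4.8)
The plan is to reduce at once to producing a single example and then invoke Theorem~\ref{thm:infprimitive}: since $d=g+1\ge g+1$, it suffices to exhibit one hyperelliptic curve $C/\Q$ of genus $g$ carrying at least one primitive point of degree $g+1$, whereupon Theorem~\ref{thm:infprimitive} furnishes infinitely many.

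First I would fix the residue field to be realized. By Hilbert's irreducibility theorem, applied to the generic polynomial of degree $g+1$, there is an algebraic number $\theta$ of degree $g+1$ over $\Q$ whose Galois closure has Galois group $S_{g+1}$. Since $S_{g+1}$ acts primitively on its $g+1$ letters --- the point stabilizer $S_{g}$ is a maximal subgroup --- Lemma~\ref{lem:primGal} shows that $K=\Q(\theta)$ is primitive. Write $m\in\Q[X]$ for the minimal polynomial of $\theta$: it is separable of degree $g+1$, and I fix $a\in\Q$ with $m(a)\ne 0$. Now I would build a genus $g$ hyperelliptic model in which $K$ occurs as a point field. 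Consider
\[
	f_c(X) \;=\; 1 + c\,(X-a)^{g}\,m(X), \qquad c\in\Q^{\times},
\]
a polynomial of degree $2g+1$. For every such $c$ the point $P=(\theta,1)$ lies on $Y^2=f_c(X)$, since $m(\theta)=0$ gives $f_c(\theta)=1$; moreover $\Q(P)=\Q(\theta)=K$, so $P$ is a primitive point of degree $g+1$ on that curve.

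The only step that requires genuine care --- and the main obstacle --- is ensuring that $f_c$ is separable for some rational $c$, so that $C:Y^2=f_c(X)$ is a smooth hyperelliptic curve of genus $g$. For this I would show that $\operatorname{disc}_X(f_c)$, as a polynomial in $c$, is not identically zero: for $\lvert c\rvert$ large the $2g+1$ roots of $f_c$ comprise one simple root near each of the $g+1$ distinct roots of $m$ together with the $g$ distinct $g$-th roots near $a$ of $-1/(c\,m(a))$, and these $2g+1$ numbers are pairwise distinct once $\lvert c\rvert$ is large enough. Hence $f_c$ is separable for all but finitely many $c\in\Q$; fixing any such $c$ produces a genus $g$ hyperelliptic curve $C$ with the primitive degree-$(g+1)$ point $P=(\theta,1)$, and Theorem~\ref{thm:infprimitive} finishes the argument. (When $g+1$ is prime one may skip the appeal to $S_{g+1}$ and take $\theta$ a root of any irreducible polynomial of degree $g+1$, a field of prime degree being automatically primitive.)
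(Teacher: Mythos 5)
Your proof is correct, and the overall strategy coincides with the paper's: reduce to exhibiting a single primitive degree~$(g+1)$ point on some genus~$g$ hyperelliptic curve and then invoke Theorem~\ref{thm:infprimitive}. The constructions of the witness curve differ genuinely, though. The paper starts from a primitive field $K=\Q(\theta)$ of degree $g+1$, shifts $\theta$ by a rational $\alpha$ so that the conjugates $\phi_i=\theta_i-\alpha$ satisfy $\phi_i\ne\pm\phi_j$, and takes $C:Y^2=f(X^2)$ where $f$ is the minimal polynomial of $\phi^2$ (irreducible of degree $g+1$ precisely because $K$ is primitive); the primitive point is then the \emph{Weierstrass} point $(\phi,0)$, and separability of $f(X^2)$ is immediate from the condition $\phi_i\ne\pm\phi_j$. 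You instead realize $K$ at the \emph{non-Weierstrass} point $(\theta,1)$ on the degree~$2g+1$ pencil $Y^2=1+c(X-a)^g m(X)$, paying for this with a root-perturbation/discriminant argument to secure separability for all but finitely many $c$; that argument is standard and sound (for large $\lvert c\rvert$ one simple root sits near each $\theta_i$ and $g$ simple roots sit near $a$ at the $g$-th roots of $-1/(c\,m(a))$, so $\operatorname{disc}_X(f_c)$ is not identically zero in $c$). Your appeal to Hilbert irreducibility plus maximality of $S_g$ in $S_{g+1}$ to \emph{produce} a primitive degree~$(g+1)$ field is also fine and slightly more self-contained than the paper, which simply takes such a field as given. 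The paper's construction buys a shorter separability check; yours buys a point off the Weierstrass locus and avoids having to argue that $\phi^2$ generates $K$.
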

\begin{proof}
Let $K=\Q(\theta)$ be any primitive number field of degree $d$.
Let $\theta_1,\dotsc,\theta_d$ be the conjugates of $\theta$
in a fixed Galois closure $\tilde{K}$ of $K$.
Choose a rational number $\alpha$ such that
$2\alpha \ne \theta_i+\theta_j$ for any pair $1 \le i,~j \le d$. 
Let $\phi=\theta-\alpha$. The conjugates of $\phi$
are $\phi_i=\theta_i-\alpha$ with $1 \le i \le d$,
and satisfy $\phi_i \ne \pm \phi_j$ for  any pair $i$, $j$.
Note that $\Q(\phi^2) \subseteq K$.
As $K$ is primitive, either $\Q(\phi^2)=\Q$
or $\Q(\phi^2)=K$.  However, if $\Q(\phi^2)=\Q$,
then $K=\Q(\theta)=\Q(\phi)$ has degree at most
$2$, contradicting the fact that $K$ has
degree $d=g+1 \ge 3$. Thus $K=\Q(\phi^2)$.
Let $f \in \Q[X]$ be the minimal polynomial
of $\phi^2$, which must be
irreducible of degree $d$. 
Let $h=f(X^2)$. 
The roots of $h$ are $\pm \phi_1,\dotsc,\pm \phi_d$
which are pairwise distinct and hence $h$ is separable
of degree $2d=2g+2$. Let $C$ be the 
genus $g$ hyperelliptic
curve 
\[
	C \; : \; Y^2 \; = \; h(X).
\]
Note that this has the primitive
degree $d$ point $(\phi,0)$. Hence by
Theorem~\ref{thm:infprimitive} there are
infinitely many primitive degree $d$ points.
\end{proof}

\section{Low degree primitive points on some $X_1(N)$}\label{sec:modular1}
Mazur \cite{Mazur_Eisenstein} showed that all rational points on $X_1(p)$
are cuspidal for prime $p  \ge 11$.
Merel's uniform boundedness theorem \cite{Merel} asserts that for prime $p$,
and for $d$ satisfying $(3^{d/2}+1)^2 \le p$, the only
degree $d$ points on $X_1(p)$ are cuspidal.
We now know, for each $1 \le d \le 8$,  the exact set
of primes $p$ such that $X_1(p)$ has degree $d$ non-cuspidal points,
thanks to the efforts of Kamienny \cite{Kamienny}, Parent \cite{Parent}, \cite{Parent17},
Derickx, Kamienny, Stein and Stoll \cite{DKSS}, and Khawaja \cite{Khawaja}.
Less is known about the low degree points on $X_1(N)$ for composite $N$,
though several authors consider the somewhat easier problem of determining
the values of $N$ such $X_1(N)$ has infinitely many
degree $d$ points for given small $d$ (see for example \cite{BELOV} and \cite{DerickxSutherland}
for two different approaches to studying this problem).
Example~\ref{ex:X1(45)} illustrates how our results can be applied
to modular curves $X_1(N)$ provided the analytic rank of $J_1(N)$
is $0$ and we have information about the quotients or gonality of $X_1(N)$.
The \texttt{LMFDB}~\cite{lmfdb} contains a database of modular curves $X_{1}(N)$ for $1\leq N\leq 70$. For $61$ of 
these curves the Jacobian $J=J_{1}(N)$ has analytic rank $0$.
It follows from a theorem of Kato \cite[Corollary 14.3]{Kato} 
that the Mordell--Weil group $J(\Q)$ is finite. 
We are able to apply Theorem~\ref{thm:relative} to around half of these curves 
in order to deduce the finiteness of primitive points of certain low degrees.
We note that it is common for $X_{1}(N)$ to cover multiple curves, and in these instances we apply 
Theorem~\ref{thm:relative} to the covered curve $C^{\prime}$ that
gives the most generous range for $d$ in inequality \eqref{eqn:gencondrel}.
We record the results in Table~\ref{tab:X1n}.

\begin{table}

\begin{tabular}{|c |c c c c c c|} 
 \hline
 $N$ & $g$ 
 & \makecell{$C^\prime$
\\(\texttt{LMFDB} label)}
 & $g^\prime$ & $m$ & 
 \makecell{$X_{1}(N)$ has 
 finitely many 
 \\ degree $d$ points}
& 
 \makecell{$X_{1}(N)$ has 
 finitely many 
 \\primitive 
 degree $d$ points} \\ 
 \hline\hline
 19 & 7 & 19.120.1-19.a & 1 & 3 & $d = 2$ & - \\
 \hline
 22 & 6 & $X_{1}(11)$ & 1 & 3 & $d = 2$ & - \\
 \hline
 24 & 5 & 24.192.1-24.dg.2.1 & 1 & 2 & $2\leq d\leq 3$& - \\
 \hline
 26 & 10 & $X_{1}(13)$ & 2 & 3 & $d = 2$ & - \\
 \hline
 27 & 13 & 27.216.1-27.a.1.1 &1 & 3 &$2\leq d\leq 5$ & - \\
 \hline
 28 & 10 & 28.288.4-28.d.1.1 & 4 & 2 & $d = 2$ & - \\
 \hline
 30 & 9 & $X_{1}(15)$ & 1 & 3 & $2\leq d\leq 3$ & - \\
 \hline
 31 & 26 & 31.320.6-31.c.1.2 & 6 & 3 & $2\leq d\leq 4$ & - \\
 \hline
 32 & 17 & 32.384.5-32.bu.1.1 & 5 & 2 & \makecell{$2\leq d\leq 7$\\
 $d\neq 4, 6$} & $d = 4, 6$ \\
 \hline
 34 & 21 & $X_{1}(17)$ & 5 & 3 &  $2\leq d\leq 3$ & - \\
 \hline 
 36 & 17 & 36.288.3-36.c.1.1 & 3 & 3 & $2\leq d\leq 4$ & - \\ 
 \hline
 38 & 28 & $X_{1}(19)$ & 7 & 3 & $2\leq d\leq 4$ & - \\
 \hline
 39 & 33 & 39.448.9-39.a.3.1 & 9 & 3 & $2\leq d\leq 3$ & - \\
 \hline
 40 & 25 & 40.576.9-40.bh.1.1 & 9 & 2 & \makecell{$2\leq d\leq 7$\\
 $d\neq 4, 6$} & $d = 4, 6$ \\
 \hline
 42 & 25 & $X_{1}(21)$ & 5 & 3 & $2\leq d\leq 5$& - \\
 \hline
44 & 36 & 44.720.16-44.e.1.1 & 16 & 2 & $2 \le d \le 3$ & $d=4$\\ 
 \hline
 45 & 41 & 45.576.9-45.a.4.1 & 9 & 3 & \makecell{$2\leq d\leq 7$\\
 $d\neq 6$} & $d=6$ \\
 \hline
 46 & 45 & $X_{1}(23)$ & 12 & 3 & $2\leq d\leq 5$ & - \\
 \hline
 48 & 37 & 48.768.13-48.nt.1.1 & 13 & 2 & \makecell{$2\leq d\leq 11$\\
 $d\neq 4, 6, 8, 10$} & $d = 4, 6, 8, 10$ \\
 \hline
 49 & 69 & 49.336.3-49.b.1.2 & 3 & 7 & $2\leq d\leq 8$ & -\\
 \hline
 50 & 48 & 50.360.4-50.a.2.2 & 4 & 5 &$2\leq d\leq 7$ & - \\
 \hline
 52 & 55 & 52.1008.25-52.p.1.1& 25 & 2 & \makecell{$2\leq d\leq 5$\\
 $d\neq 4$} & $d=4$ \\
 \hline
 54 & 52 & 54.648.10-54.a.1.1 & 10 & 3 & \makecell{$2\leq d\leq 11$\\
 $d\neq 6, 9$} & $d= 6, 9$ \\
 \hline
 56 & 61 & 56.1152.25-56.bq.1.1 & 25 & 2 & \makecell{$2\leq d\leq 11$\\
 $d\neq 4, 6, 8, 10$} & $d = 4, 6, 8, 10$\\
 \hline
 60 & 57 & 60.1152.25-60.eb.2.1 & 25 & 2 & \makecell{$2\leq d\leq 7$\\
 $d\neq 4, 6$} & $d = 4, 6$\\
 \hline
 62 & 91 & $X_{1}(31)$ & 26 & 3 & \makecell{$2\leq d\leq 7$\\
 $d\neq 6$} & $d = 6$ \\
 \hline
 64 & 93 & 64.1536.37-64.ef.1.1 & 37 & 2 & 
 \makecell{$d=2$\\
 $3\leq d\leq 19$, odd $d$ }
 & \makecell{$4\leq d\leq 18$\\
 even $d$}\\
 \hline
 66 & 81 & $X_{1}(33)$ & 21 & 3 & \makecell{$2\leq d\leq 9$\\
 $d\neq 6, 9$} & $d=6, 9$\\
 \hline
 68 & 105 & 68.1728.49-68.ba.1.1 & 49 & 2 &\makecell{$2\leq d\leq 7$\\
 $d\neq 4, 6$} & $d=4, 6$\\
 \hline
 70 & 97 & $X_{1}(35)$ & 25 & 3 & \makecell{$2\leq d\leq 11$\\
 $d\neq 6, 9$} & $d = 6, 9$\\ 
 \hline
\end{tabular}
\caption{The table summarizes our conclusions upon applying 
Theorem~\ref{thm:relative} to $C=X_1(N)$ for the values of $N$ in the first column.
Here $g$ denotes the genus of $X_{1}(N)$; the integer $m$ denotes the degree of the morphism 
$X_{1}(N)\rightarrow C^\prime$; $g^\prime$ denotes the genus of $C^\prime$.
The sixth column gives the values of $d$ furnished by the theorem
for which there are only finitely many
points of degree $d$. The final column gives the values of $d$
(not appearing in the previous column) for which the theorem
asserts that there are only finitely primitive degree $d$ points.}
\label{tab:X1n}
\end{table}

In 
\cite[Theorem 3.1]{Derickx21} the authors give
a complete list
of $N$ for which $J_1(N)$ has analytic rank $0$.
There are in total $83$ such values of $N$,
the largest of which is $N=180$.

\section{Low degree primitive points on some $X_0(N)$}\label{sec:modular2}
The computational study of quadratic points on modular curves is an active area of research
(see e.g. \cite{AKMJNOV2023}, \cite{bruin_najman_2015}, \cite{FLS}, \cite{NV23}, \cite{OzmanSiksek}, to name but
a few works).  Comparatively less is known about points defined over number
fields of higher degree.
Still, there is reason to be hopeful. 
Establishing the modularity of all elliptic curves over totally real cubic fields \cite{DNS20}, and totally real quartic fields not containing $\sqrt{5}$ \cite{Box22} required the study of cubic, and quartic points on certain modular curves.
Banwait and Derickx
\cite{BD22} have determined all cubic points on $X_{0}(N)$ for 
$N\in\{41, 47, 59, 71\}$.
Box, Gajovi\'{c}, and Goodman \cite{BGG22} have determined all cubic points on $X_{0}(N)$ for $N\in\{53,57,61,65,67,73\}$, and all quartic points on $X_{0}(65)$.
A famous theorem of Ogg \cite{Ogg} asserts that there are $19$ values of $N$ for which which $X_0(N)$ is hyperelliptic.
Of these, the only one for which $J_0(N)(\Q)$ is infinite is $N=37$. The remaining $18$ values are
\begin{itemize}
\item genus $2$: $N=22$, $23$, $26$, $28$, $29$, $31$, $50$;
\item genus $3$: $N= 30$, $33$, $35$, $39$, $40$, $41$, $48$; 
\item genus $4$: $N=47$;
\item genus $5$: $N=46$, $59$;
\item genus $6$: $N=71$.
\end{itemize}
For these $N$, the quadratic points on $X_{0}(N)$ have been
determined by Bruin and Najman \cite{bruin_najman_2015}.  
It is easy to apply Corollary~\ref{cor:hyp} to these curves
and derive conclusions about algebraic points of degree $3 \le d \le g$, where $g$ is the genus of $X_0(N)$. 
For example, consider $C=X_0(71)$ with genus $g=6$.
By Corollary~\ref{cor:hyp} we know that there are only finitely many points on $X_0(71)$ of degrees $3$ and $5$,
and finitely many primitive points of degrees $4$, $6$. We point out that we can in fact go further
and compute these finite sets of points, as sketched in Remark~\ref{subsec:effectivity}.
We illustrate this by giving some details of the computation of 
primitive degree $6$ points on $X_0(71)$, making use
of information found in \cite{bruin_najman_2015} concerning
the model and the Mordell--Weil group. A model for $X_0(71)$ is given by
\begin{multline*}
	X_0(71) \; : \; Y^2 \; = \; X^{14} + 4 X^{13} - 2 X^{12} - 38 X^{11} - 77 X^{10} 
    - 26 X^9 + 111 X^8\\ + 148 X^7 + X^6 - 122 X^5 - 70 X^4 + 30 X^3 + 40 X^2 + 
    4 X - 11.
\end{multline*}
The only rational points are the two rational points at infinity which we denote by
$\infty_+$ and $\infty_{-}$ (these are in fact the two cusps of $X_0(71)$). Write 
\[
	D_0=\infty_+ - \infty_{-}, \qquad D_\infty=\infty_{+}+\infty_{-}.
\]
Then,
\[
	J(\Q) \; = \; (\Z/35\Z) \cdot [ D_0],
\]
where $J=J_0(71)$.  Let $P$ be a primitive degree $6$ point on $X_0(71)$, and let $D$
be the corresponding effective irreducible degree $6$ divisor. Hence
$[D-3 D_\infty] \in J(\Q)$. It follows that
\[
	D \in \lvert D_a \rvert, \qquad D_a=a \cdot D_0+3 D_\infty, \qquad -17 \le a \le 17.
\]
We find that $\ell(D_a)$ is $4$ for $a=0$, is $3$ for $a=\pm 1$, is $2$ for $a=\pm 2$ and is $1$
for all other values of $a$. 
If $\ell(D_a) \ge 2$  then, by Corollary~\ref{cor:isolated}, 
we know that $\lvert D_a \rvert$
does not contain primitive divisors.
Thus $D \in \lvert D_a \rvert$ for $-17 \le a \le -3$ or $3 \le a \le 17$ whence $\ell(D_a)=1$.
For each of these values, $L(D_a)=\Q \cdot f_a$ where $f_a$ is a non-zero function on
$X_0(71)$. Moreover, if $D \in \lvert D_a\rvert$ then $D=D_a+\divv(f_a)$. We obtain $30$ potential possibilities for the divisor $D$.
We find that for $a=\pm 3$, the divisor $D_a+\divv(f_a)$ is reducible, and for $a=\pm 5$, $\pm 7$, $\pm 12$,
the divisor $D_a+\divv(f_a)$ is the Galois orbit of an imprimitive point. The remaining $22$ values
of $a$ yield the Galois orbit of a primitive degree $6$ point. We conclude that there
are precisely $22$ primitive degree $6$ points on $X_0(71)$ up to Galois conjugacy.

We carried out similar computations for the hyperelliptic $X_0(N)$ with $N\in\{46, 47, 59, 71\}$, and for degrees $d$
in the range $3 \le d \le \min(g,6)$ where $g$ is the genus of $X_0(N)$. 
The outcome of these computations is summarized in Table~\ref{table:X0n}. Here
we were helped by the fact 
that these values of $N$, the Mordell--Weil group $J_0(N)(\Q)$ has been computed by Bruin and Najman \cite{bruin_najman_2015}.
Furthermore, models for the curves are readily available in \texttt{Magma} 
\cite{Magma}
via the \texttt{Small Modular Curve package}.

\bigskip

In view of Corollary~\ref{cor:bielliptic}, it is natural to also consider bielliptic $X_0(N)$. 
Bars \cite{Bars} shows that $X_0(N)$ is bielliptic for precisely $41$ values of $N$. Of these,
$J_0(N)$ has analytic rank $0$ for $30$ of these values:
\begin{itemize}
\item genus $2$: $N= 22$, $26$, $28$, $50$;
\item genus $3$: $N=30$, $33$, $34$, $35$, $39$, $40$, $45$, $48$, $64$; 
\item genus $4$: $N=38$, $44$, $54$, $81$;
\item genus $5$: $N=42$, $51$, $55$, $56$, $63$, $72$, $75$;
\item genus $7$: $N=60$, $62$,  $69$;
\item genus $9$: $N=95$;
\item genus $11$: $N=94$, $119$.
\end{itemize}
Again, it is straightforward to apply Corollary~\ref{cor:bielliptic} to these
curves.  We computed all primitive points of certain low degrees on the genus
$7$
bielliptic curves $X_{0}(60)$ and $X_0(62)$.
For these two curves 
the
size of the Mordell--Weil group has been computed by Najman and Vukorepa
\cite{NV23}. We computed models for these curves
and Mordell--Weil generators using a \texttt{Magma}
package developed by Ozman and Siksek
\cite{OzmanSiksek}, Ad\v{z}aga, Keller, Michaud-Jacobs, Najman, Ozman and
Vukorepa \cite{AKMJNOV2023}, and Najman and Vukorepa \cite{NV23}. 
All computations were performed in \texttt{Magma}. We summarize
our results in Table~\ref{table:X0n}, and 
refer the reader to
\[
	\text{\url{https://github.com/MaleehaKhawaja/Primitive}} 
\]
for the supporting code as well as a description of the points.

\medskip

We also give a description of all 
effective degree $d$ divisors $D$ with $\ell(D)=1$, and
refer the reader to Table~\ref{table:X0nbreak} for this summary.

\begin{table}

\begin{tabular}{|c| c| c| c| c| c |c |} 
 \hline
	    &     &         & 
	\multicolumn{4}{c|}{Number of primitive degree}\\ 
	$N$ & $g$ & $J(\Q)$ & 
	\multicolumn{4}{c|}{$d$ points on $X_{0}(N)$}\\ 
  &  &  & $d=3$ & $d=4$ & $d=5$ & $d=6$  \\
 \hline
$46$ & $5$ & $\Z/11\Z\times\Z/22\Z$ & $2$ & $4$ & $88$ & $-$\\
\hline
$47$ & $4$ & $\Z/23\Z$ & $2$ & $12$ & $-$ & $-$ \\
\hline
$59$ & $5$ &$\Z/29\Z$ & $1$ & $2$ & $16$ & $-$ \\
\hline 
$60$ & $7$ & $\Z/4\Z\times(\Z/24\Z)^3$ & $0$ & $0$ & $120$ & $-$\\
\hline
$62$ & $7$ & $\Z/5\Z\times\Z/120\Z$ & $2$ & $0$ & $0$ & $-$\\
\hline
$71$ & $6$ & $\Z/35\Z$ & $0$ & $0$ & $0$ & $22$\\
\hline
\end{tabular}
\caption{This table gives the conclusions of our
computations of primitive points on $X_{0}(N)$
of certain low degrees $d$ and for the values of $N$ is the 
first column. Here $g$ is the genus
of $X_0(N)$, and $J(\Q)$ is in fact the structure
of the Mordell--Weil group where $J=J_0(N)$. 
The table gives the number of primitive degree $d$ points
on $X_0(N)$ up to Galois conjugacy. The symbol $-$ indicates that
	our method is inapplicable for that particular $N$ and $d$.}
\label{table:X0n}
\end{table}

\begin{table}

\begin{tabular}{|c| c| c| c| c| c| c| c| c| c| c |} 
 \hline
	$N$ & \multicolumn{2}{|c|}{$d=3$} & \multicolumn{3}{|c|}{$d=4$} & \multicolumn{2}{|c|}{$d=5$} & \multicolumn{3}{|c|}{$d=6$}\\ 
	\hline
	& $n_{3, p}$ & $n_{3, r}$ & $n_{4, p}$ & $n_{4, i}$ & $n_{4, r}$ & $n_{5, p}$ & $n_{5, r}$ &  $n_{6, p}$ & $n_{6, i}$ & $n_{6, r}$\\
 \hline
$46$ & $2$ & $20$ & $4$ & $10$ & $42$ & $88$ & $128$ & $-$ & $-$ & $-$\\
\hline
$47$ & $2$ & $2$ & $12$ & $2$ & $6$ & $-$ & $-$ & $-$ & $-$ & $-$\\ 
\hline
$59$ & $1$ & $2$ & $2$ & $0$ & $4$ & $16$ & $8$ & $-$ & $-$ & $-$ \\
\hline
$60$ & $0$ & $364$ & $0$ & $22$ & $1349$ & $120$ & $4440$ & $-$ & $-$ & $-$\\
\hline
$62$ & $2$ & $28$ & $0$ & $0$ & $58$ & $0$ & $100$ & $-$ & $-$ & $-$\\
\hline
$71$ & $0$ & $2$ & $0$ & $0$ & $2$ & $0$ & $2$ & $22$ & $6$ & $2$\\
\hline
\end{tabular}
\caption{
For each pair $(N,d)$, the table gives a description of the 
effective degree $d$ divisors $D$ with $\ell(D)=1$ 
on the modular curve $X_{0}(N)$.
We denote by $n_{d, r}$ the number of such divisors
that are reducible, 
$n_{d, p}$ the number of such divisors
that are irreducible and primitive,
and 
$n_{d, i}$ the number of such divisors
that are irreducible but imprimitive.
 The symbol $-$ indicates that we did not carry out the
computation for the pair $(N, d)$.
}
\label{table:X0nbreak}
\end{table}

\section{Data Availability Statement}
The data related to this paper is available at 
\[
	\text{\url{https://github.com/MaleehaKhawaja/Primitive}} 
\]

\section{Conflict of Interest Statement}

All authors certify that they have no affiliations with or involvement in any
organization or entity with any financial interest or non-financial interest in
the subject matter or materials discussed in this paper. If however, after publication,
any organization
or entity manages to generate wealth from the theorems herein, they are exhorted
to share a generous proportion of the said wealth with the authors.

\bibliographystyle{abbrv}
\bibliography{Primitive}
\end{document}